\newcommand{\lowerarrow}[1]{%
\setlength{\unitlength}{0.03\DiagramCellWidth}
\begin{picture}(0,0)(0,0)
\qbezier(-28,-4)(0,-18)(28,-4)
\put(0,-14){\makebox(0,0)[t]{$\scriptstyle {#1}$}}
\put(28.6,-3.7){\vector(2,1){0}}
\end{picture}
}
 \newtheorem{thm}{Theorem}
 \newtheorem{lem}{Lemma}
  \newtheorem{sublem}{Sublemma}
 \newtheorem{conj}{Conjecture}
 \newtheorem{defi}{Definition}
 \theoremstyle{remark}
 \newtheorem{rem}{Remark}[section]
\newcommand{\e}{\leqno}
\newcommand {\bi}[1]{\textbf{\textit {#1}}}
\title{Finitely presented groups and the Whitehead nightmare}
\author{Daniele Ettore Otera\footnote{Mathematics and Informatics Institute
 of Vilnius University, Akademijos st. 4, LT-08663, Vilnius, Lithuania.
e-mail: daniele.otera@mii.vu.lt \ - \  daniele.otera@gmail.com}
 \textsc{ and } Valentin Po\'enaru\footnote{Professor Emeritus at
Universit\'e Paris-Sud 11, D\'epartement de Math\'ematiques,
B\^atiment 425, 91405 Orsay, France. e-mail: valpoe@hotmail.com }}
\date{}
\begin{document}

\maketitle

\begin{abstract}

We define a ``nice representation'' of a finitely presented group $\Gamma$ as being a non-degenerate essentially surjective simplicial map $f$ from a ``nice'' space $X$ into a 3-complex  associated to a presentation of $\Gamma$, with a strong control over the singularities of $f$, and such that  $X$ is \textsc{wgsc} (\textit{weakly geometrically simply connected}), meaning that it admits a filtration by simply connected and compact subcomplexes.
In this paper we study  such representations for a very large class of groups, namely \textsc{qsf}  (\textit{quasi-simply filtered})  groups,  where \textsc{qsf} is a topological tameness condition of groups that is similar, but weaker, than \textsc{wgsc}. In particular, we prove that any \textsc{qsf} group admits a \textsc{wgsc}  representation which is locally finite, equivariant and whose double point set is closed.

\vspace{0.1cm} \noindent {\bf Keywords:} finitely presented groups, weak geometric simple connectivity, quasi-simple filtration, universal covering spaces, singularities.

\vspace{0.1cm}  \noindent {\bf MSC Subject:} 57M05; 57M10;  57N35.
\end{abstract}

\section{Introduction}

The present paper deals with finitely presented groups satisfying a rather mild tameness condition, the \textsc{qsf} \textit{property} introduced and studied by  Brick and  Mihalik in \cite{BM1}. Roughly speaking, a space
is \textsc{qsf} (\textit{quasi-simply filtered}) if any compact subspace of it can be
``approximated'' by an (abstract) simply connected compact space; in particular this means that a \textsc{qsf} space admits a ``quasi-filtration'' by compact and simply connected subspaces. We may also give for it the following equivalent definition (that is actually a theorem of the first author (D.O.) with L. Funar \cite{FO2}): the finitely presented group $\Gamma$ is \textsc{qsf} if and only if there exists a smooth compact manifold
$M$ such that $\pi_1 M= \Gamma$ and whose universal cover $\widetilde M$ is
\textit{weakly geometrically simply connected} (\textsc{wgsc}), meaning that $\widetilde M$ admits a filtration by compact and simply connected submanifolds. But one should keep in mind that this simpler condition is
\textbf{not} group-presentation invariant, while the original
definition of Brick and Mihalik, which we will define properly  in the next
section, is. This is actually one of the important virtues of the
concept \textsc{qsf}.

\vspace{0.2cm}

Although usually a (topological) presentation of a finitely presented group
$\Gamma$ is just  any finite simplicial complex $K$ such that
$\pi_1 K=\Gamma$,  in this present paper, we will be very choosy for them (see Section 2). Actually, all  along the paper, a \textit{presentation} of $\Gamma$   will  \textbf{always} be a particular 3-dimensional complex obtained by a suitable thickening of a 2-complex (or, in other words, a compact 3-manifold with \textbf{singularities}, locally as (the wedge of three lines)$\times \mathbb R^2$; this is enough for
catching all finitely presented  groups. Let us denote by $M(\Gamma)$ such
a 3-complex with $\pi_1 M(\Gamma) =
\Gamma$, and by $\widetilde {M(\Gamma)}$ its universal cover (we will not recall anymore they are 3-dimensional spaces).

Our basic tool for dealing with finitely presented groups  will be the notion of 
\textsc{wgsc-representation}, which we will define formally in
the next section. It will suffice to say, for right now, that
contrary to the more usual group representations which, for a group 
$\Gamma$, take the general form ``$\Gamma \to$ something'', our
\textsc{representations}, which we will always write in
capital letters, take the dual form ``some space $X$ with
special features $\overset{f}{ \longrightarrow }
\widetilde {M(\Gamma)}$'' (and note that the universal covering space
$\widetilde{M(\Gamma)}$ is the same thing as the group $\Gamma$,
up to quasi-isometry).

This triple, $X \overset{f}{ \longrightarrow } \widetilde
{M(\Gamma)}$, is endowed with the following properties: $X$ is a 
  simplicial complex which is
\textit{weakly geometrically simply connected} (\textsc{wgsc}), $f$ is a
\textbf{non-degenerate} simplicial map, meaning that
$f(d$-simplex$)=d$-simplex, and, furthermore, the map $f$ is 
\textit{zippable}, by which we intend that the ``smallest''
equivalence relation on $X$ which is compatible with $f$ and which
is also such that the quotient space immerses into $\widetilde
{M(\Gamma)}$, via the obviously induced map, is the trivial
equivalence relation induced by $f$ itself, namely: $x\sim y
\Longleftrightarrow fx=fy$. In other words, what zippability means
is that the ``cheapest'' way to kill all the \textit{singularities} of $f$ (that are the points  $x\in X$ where $f$ is not locally an embedding,
i.e. the non-immersive points of $X$), is to kill all the double
points of $f$; and this will actually happen via folding maps.

\vspace{0.2cm}
Here are some additional explanations concerning this 
definition, which was given now rather informally, and which will
be restated rigorously in the next section. First of all,  the notion of 
\textit{weak geometric simple connectivity} (\textsc{wgsc}) has been introduced  by L. Funar (see \cite{FG}), and also studied by him and the first author (D.O.), in the  context of geometric group theory (see \cite{FO2}). 

\begin{defi}\label{wgsc}
A locally finite simplicial complex  $X$  is  said to be  {\em {weakly geometrically simply
connected}} (\textsc{wgsc}), if it has an
exhaustion by finite (compact) and  simply connected subcomplexes
 $K_1 \subset K_2 \subset \ldots \subset  X=\cup_i K_i$.
\end{defi}

But when it comes to groups, \textsc{wgsc} is not the more appropriate condition, since it is not presentation independent, and the good one is the \textsc{qsf} property  introduced by  Brick and
Mihalik \cite{BM1}.

\begin{defi}\label{qsf}
 A locally compact simplicial complex $X$ is \textsc{qsf} (i.e. \textbf{quasi-simply filtered}) if  for any compact
subcomplex $k \subset X$ there is a simply-connected compact
(abstract) complex $K$ endowed with an inclusion $k \overset {j}
{\longrightarrow } K$ and with a simplicial map $K \overset {f}
{\longrightarrow } X$ satisfying the  {\em{``Dehn condition''}}:  $M_2(f) \cap
j(k) = \emptyset$ (where $M_2(f)\subset K$ denotes the set of
double points of $f$), and entering in the following  commutative diagram:
$$\begin{diagram}
k &&\rInto^j(4,2)&&K \\
    &\rdTo_i& &\ldTo_f\\& &X
\end{diagram} \e (1) $$
where  $i$ is the canonical injection.
\end{defi}

Then, being \textsc{qsf} means that the space $X$ can just be ``approximated'' by a filtration of simply connected and compact subspaces; but what one gains is very valuable, since, unlike \textsc{wgsc},
\textsc{qsf} turns out to be a group theoretical,
presentation-independent notion: if $K_1, K_2$ are two presentations
 (i.e. presentation complexes) for the same finitely presented 
  group $\Gamma$, then $\widetilde K_1 \in \mbox
{\textsc{qsf}} \iff \widetilde K_2 \in\mbox {\textsc{qsf}}$ \cite{BM1}, and in such a case 
  $\Gamma$ is said \textsc{qsf}.  Recently, in the group theoretical context,
L. Funar and the first author (D.O.) \cite{FO2}    have proved
that  if $\Gamma$ has a presentation
$K_1$ such that $\widetilde K_1 \in $ \textsc{qsf}, then it also
has a presentation $K_2$ such that $\widetilde K_2 \in $
\textsc{wgsc}.

\vspace{0.2cm}

Coming back to the non-degeneracy of $f$,  note that this means, among other things, that the dimension of the \textsc{representation} space $X$, source of $f$, is
restricted to dim $X\leq 3$; and the only serious cases are actually
dim $X=2$ and dim $X=3$, each interesting in its own right.

So, we will speak about $2^d$-  and
$3^d$-\textsc{representations}, and the capital letters should
remind the reader that we are not talking about the mundane group
representations, where the dimension of the representation means
quite a different thing. Retain also that our
\textsc{wgsc-representations} $X \overset{f}{\longrightarrow }
\widetilde {M(\Gamma)}$ are sort of {\em{resolutions}} of $\widetilde {M(\Gamma)}$ into a \textsc{wgsc} space $X$.

\vspace{0.2cm}

With all these things, here is our main result, whose statement
 will become more precise (see Theorem  \ref{thm2}) in the next sections, after the \textsc{representations} are more formally defined.

\begin{thm}\label{thm0v1}

For any finitely presented \textsc{qsf} group $\Gamma$, there exists a
$2^d$-\textsc{wgsc-representation} $X^2 \overset{f}{ \longrightarrow }
\widetilde {M(\Gamma)}$, where the simplicial
complex $X^2$ is locally-finite and  \textsc{wgsc}, such that:
\begin{enumerate}
\item[(i)] both $f(X^2) \subset \widetilde {M(\Gamma)}$ and the double point 
set $M_2(f)\subset X^2$ are \textbf{closed} subsets;

\item[(ii)] moreover, one can get an $X^2$ with a
free $\Gamma$-action $\Gamma \times X \to X$ such that $f$ is
\textbf{equivariant}, i.e. $f(\gamma x)=\gamma x$ for all $\gamma
\in \Gamma$, $x\in X$.
\end{enumerate}

\end{thm}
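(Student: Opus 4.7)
The plan is to construct the representation $X^2$ by bootstrapping directly from the \textsc{qsf} definition, assembling a \textsc{wgsc} direct limit out of the compact simply-connected fillings provided by Definition \ref{qsf}, and then equivariantizing the result by combining $\Gamma$-translates. This makes essential use of the fact, recalled in the introduction, that a \textsc{qsf} group always admits a presentation with \textsc{wgsc} universal cover, but the challenge here is to produce an \textbf{external} resolution $X^2 \to \widetilde{M(\Gamma)}$ whose source is \textsc{wgsc} for \emph{every} chosen presentation $M(\Gamma)$.

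First, I would fix a $\Gamma$-equivariant exhaustion $k_1 \subset k_2 \subset \cdots$ of $\widetilde{M(\Gamma)}$ by finite subcomplexes (taking, for instance, simplicial neighbourhoods of a growing family of fundamental domains). For each $i$, apply the \textsc{qsf} hypothesis to $k_i$ to obtain a simply-connected compact complex $L_i$, an inclusion $j_i : k_i \hookrightarrow L_i$, and a simplicial map $g_i : L_i \to \widetilde{M(\Gamma)}$ satisfying the Dehn condition $M_2(g_i) \cap j_i(k_i) = \emptyset$. By replacing each $L_i$ with a $2$-dimensional spine (enough to carry the fundamental-group information and to produce a $2^d$-\textsc{representation}), I may assume each $L_i$ is a $2$-complex. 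I would then inductively graft $L_{i+1}$ onto $L_i$ along the embedded overlap $j_i(k_i) \subset j_{i+1}(k_{i+1})$; since the Dehn condition means the gluing locus is genuinely embedded in both pieces, simple-connectivity is preserved at each finite stage by van Kampen, so the resulting chain $L_1 \subset L_2' \subset L_3' \subset \cdots$ has a direct limit $X^2$ which is locally finite, \textsc{wgsc}, and carries a non-degenerate, essentially surjective, simplicial map $f : X^2 \to \widetilde{M(\Gamma)}$. Zippability is essentially tautological from the construction, since any equivalence coarser than $x \sim y \iff f(x)=f(y)$ that were compatible with $f$ and immersed into $\widetilde{M(\Gamma)}$ would violate non-degeneracy on the interior of some simplex.

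To obtain the equivariant version in (ii), I would perform the above $\Gamma$-symmetrically: choose the $k_i$'s together with their fillings $L_i$ so that each $\gamma \in \Gamma$ carries $L_i$ to another admissible filling of $\gamma \cdot k_i$, and take at stage $i$ the union of translates $\bigsqcup_{\gamma \in \Gamma} \gamma \cdot L_i$ glued along their $\Gamma$-translated Dehn interfaces. The free $\Gamma$-action on $\widetilde{M(\Gamma)}$ lifts freely to the direct limit $X^2$, and $f$ is equivariant by construction. Closedness of $f(X^2)$ can be arranged by making $f$ actually surjective (using that the $k_i$ exhaust $\widetilde{M(\Gamma)}$); closedness of $M_2(f) \subset X^2$ reduces to the fact that in any compact region only finitely many gluing steps have acted, so $M_2(f)$ is locally a finite union of closed simplicial subcomplexes.

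The main obstacle is controlling the singularities across the inductive limit simultaneously with equivariance: the Dehn condition for stage $i+1$ guarantees only that $M_2(g_{i+1})$ misses $k_{i+1}$, not that it interacts well with the previously glued piece $L_i'$. One must ensure that newly created double points neither accumulate onto smooth points of $X^2$ nor spread outside of any compact region of the source, which forces a careful scheduling of the grafting operations — pushing the singular set of $L_{i+1}$ into the complement of a thickening of $L_i'$ before gluing, in the spirit of Brick–Mihalik type arguments \cite{BM1}. Requiring this scheduling to be itself $\Gamma$-equivariant, so that the singular regions of $\gamma \cdot L_i$ are treated uniformly in $\gamma$, is where the bulk of the technical work will lie and is the delicate point of the argument.
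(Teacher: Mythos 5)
Your approach is fundamentally different from the paper's, and it has a genuine gap at its core: the direct limit you build is \textbf{not locally finite}, and local finiteness is precisely the hard part of the theorem. The \textsc{qsf} definition hands you, for each $k_i$, a separate \emph{abstract} compact filling $L_i$ with no compatibility between consecutive stages beyond the fact that $L_{i+1}$ contains a copy of $k_{i+1}\supset k_i$. When you glue $L_{i+1}$ onto $L_i'$ along $j_i(k_i)$, a point $x\in k_1$ acquires, in the limit, a neighbourhood containing the (mutually unidentified) links of $x$ in \emph{every} $L_i$; the link of $x$ in $X^2$ is therefore an infinite complex. This is exactly the phenomenon the paper warns about when it says that ``the simplest \textsc{representations} which one stumbles upon \textbf{fail} to be locally finite.'' The same defect sinks your closedness argument for $M_2(f)$: it is false that ``in any compact region only finitely many gluing steps have acted,'' since every neighbourhood of a point of $k_1$ meets all the $L_i$, and the Dehn condition controls only $M_2(g_i)\cap j_i(k_i)$ for each $i$ separately, not the double points between $L_i\smallsetminus j_i(k_i)$ and $L_{i'}$ for $i\neq i'$ --- which is where the accumulation actually happens. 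Two further points are asserted rather than proved: equivariance (the $L_i$ are abstract, carry no $\Gamma$-action, and the translates $\gamma L_i$, $\gamma' L_i$ have no canonical identification over the overlap $\gamma k_i\cap\gamma' k_i$), and zippability (your argument only re-derives the trivial inclusion $\Psi(f)\subset\Phi(f)$; the content of $\Psi(f)=\Phi(f)$ is the reverse containment, for which the paper uses the existence of a section $\widetilde{M(\Gamma)}\subset X^3$ with $f|_{\widetilde{M(\Gamma)}}=\mathrm{id}$, something your construction does not provide).

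The paper avoids all of this by never gluing the \textsc{qsf} fillings directly. Instead it resolves the singularities of $M(\Gamma)$ to a smooth $4$-manifold $\Theta^4(M(\Gamma),\mathcal R)$, stabilizes by high-dimensional balls so that \textsc{qsf} upgrades to \textsc{wgsc} (Lemma \ref{lemA}, the Funar--Otera mechanism --- this is the \emph{only} place \textsc{qsf} is used), and then works \emph{downstairs} on the compact quotient: it builds affine and foliated triangulations of $Y^4\times B^{N_1}$ compatible via admissible subdivisions (Lemmas \ref{lemC}--\ref{lemF}), and only at the very end lifts to universal covers and passes to $3$- and $2$-skeleta. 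Equivariance, local finiteness, and the closedness of $M_2(f)$ then come for free because everything is locally modelled on a finite complex downstairs. If you want to salvage your strategy, you would need to replace the independent fillings $L_i$ by a genuinely nested, $\Gamma$-compatible system --- and producing such a system is essentially equivalent to the \textsc{wgsc}-presentation theorem of \cite{FO2} that the paper's Lemma \ref{lemA} invokes.
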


\begin{defi}\label{easy representations}
A \textsc{representation} satisfying condition (i) above will be called \textbf{easy}. 
\end{defi}

Thus, we may also rephrase Theorem \ref{thm0v1} as follows:  
finitely presented \textsc{qsf} groups admit easy \textsc{wgsc-representations}.

Since \textsc{wgsc} is a weak and simplified version of the more known \textsc{gsc} (\textit{geometric simple connectivity})  concept, which stems from  differential
topology, and which concerns handle decompositions without handles of index 1 (for more on this important notion see e.g. \cite{FG, FO2, Ot_surv, Ot-Po-Ta, Po2-duke, Po_QSF2}),  we propose, at least informally, the following definition and the associated conjecture:

\begin{defi}\label{easy groups}
A finitely presented group $\Gamma$ is \textbf{easy} (or {\bf {easily-representable}}) 
if  it admits a 2-dimensional \textsc{gsc-representation} $X^2 \overset{f}{ \longrightarrow }
\widetilde {M (\Gamma)}$ (namely with a \textsc{gsc} $X^2$) which is easy, in the sense just defined (i.e. with closed $f(X) \subset \widetilde {M(\Gamma)}$ and
$M_2(f)\subset X^2$).
\end{defi}

\begin{conj}
All finitely presented \textsc{qsf} groups are easy (i.e. easily-representable).
\end{conj}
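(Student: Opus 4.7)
The plan is to upgrade the easy \textsc{wgsc-representation} $f \colon X^2 \to \widetilde{M(\Gamma)}$ produced by Theorem \ref{thm0v1} to a \textsc{gsc-representation}, while preserving local finiteness, free $\Gamma$-equivariance, and the two closedness conditions that $f(X^2)$ and the double-point set $M_2(f)$ be closed subsets. Because Theorem \ref{thm0v1} already delivers all the other features, the conjecture reduces to replacing the exhaustion by simply connected compact subcomplexes $K_1 \subset K_2 \subset \cdots$ of $X^2$ with a cofinal filtration whose pieces admit handle decompositions free of 1-handles.

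The core technical step I would attempt is an equivariant handle-trading argument inside this \textsc{wgsc} filtration. Each $K_n$ is simply connected and compact, so every 1-handle of a chosen handle decomposition is homologically trivial, and can in principle be cancelled by a complementary 2-handle. The argument must be made $\Gamma$-equivariant: one would pick a fundamental domain $D$ for the free $\Gamma$-action on $X^2$, perform the cancellations inside $D$, and then propagate them by $\Gamma$. Simultaneously, every collapse or expansion used to cancel handles on the source side must be covered by a compatible move on the immersive image in $\widetilde{M(\Gamma)}$, so that zippability, and hence the folding structure of $f$, is preserved through the modification.

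The principal obstacle is to guarantee that after this equivariant handle-simplification, the resulting complex $X^2_{\mathrm{new}}$ retains the \emph{easy} properties of Definition \ref{easy representations}. Handle cancellations performed simultaneously over all $\Gamma$-translates can easily produce accumulation of singular points in the new image, thereby destroying the closedness of either $f(X^2_{\mathrm{new}})$ or $M_2(f)$. A natural safeguard would be to choose cancellations whose supporting 1- and 2-handles lie in a neighborhood of uniformly bounded diameter, ideally inside a single fundamental domain, so as to rule out any accumulation at infinity by a compactness-in-the-quotient argument.

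An alternative route would be to equivariantly thicken $X^2$ into a higher-dimensional complex where the \textsc{gsc} machinery developed in \cite{Po2-duke, Po_QSF2} applies more directly, and then to descend to a 2-dimensional \textsc{gsc-representation} via a controlled spine. I expect the hardest part to be precisely this descent: the 4-dimensional equivariant \textsc{gsc} handle-trading is tractable, but producing from it a 2-dimensional spine whose induced map still satisfies closedness of image and of double points, without reintroducing the pathological accumulation that easiness forbids, appears to require a genuinely new finite-type estimate on the singularities of $f$. The present paper establishes the \textsc{wgsc} version of the result, and the conjecture asserts that this extra layer of handle simplification can always be performed equivariantly; proving it seems to demand handle-cancellation techniques in the equivariant, singular-controlled setting that go beyond the methods developed here.
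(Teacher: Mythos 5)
The statement you are addressing is labelled a \emph{Conjecture} in the paper, and the paper offers no proof of it: what is actually proved (Theorem \ref{thm0v1} / Theorem \ref{thm2}) is only the \textsc{wgsc} version, and the passage from \textsc{wgsc} to \textsc{gsc} easy representations is explicitly left open. Your text is consistent with this --- it is a research plan rather than a proof, and you say so yourself in the last sentence (``proving it seems to demand \dots techniques that go beyond the methods developed here''). So there is nothing to compare against a proof in the paper; the honest verdict is that no proof has been given, by you or by the authors.

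As to whether your plan could be pushed through, the central step is exactly where it would break. You assert that in each simply connected compact piece $K_n$ ``every 1-handle \dots can in principle be cancelled by a complementary 2-handle.'' Simple connectivity only gives \emph{algebraic} cancellation of 1-handles; \emph{geometric} cancellation is obstructed in low dimensions, and the gap between the two is precisely the content of the \textsc{gsc} versus \textsc{wgsc} distinction that the conjecture is about (it is also why the program of \cite{Po_QSF2, Po_QSF3} is long and delicate rather than a handle-trading exercise). Moreover the $K_n$ here are subcomplexes of a singular 2-complex, not smooth manifolds, so the smooth cancellation lemma does not apply as stated; one would have to work in the singular-handlebody setting of \cite{Po_QSF1_Geom-Ded}, where no such cancellation result is available. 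Your proposed safeguard (supporting all cancellations in a single fundamental domain) also conflicts with the fact that a 1-handle of $K_n$ may only die in a much larger $K_m$, so the cancelling 2-handles cannot in general be kept at bounded distance; controlling that growth equivariantly while preserving closedness of $f(X^2)$ and $M_2(f)$ is the open problem, not a step in its solution. The alternative route via a higher-dimensional thickening followed by a descent to a 2-spine runs into the same difficulty at the descent stage, as you note. In short: you have correctly identified the obstacles, but you have not overcome any of them, and the statement remains a conjecture.
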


\begin{rem}

\begin{enumerate}

\item  The converse implication is already a theorem proved by
the authors in  \cite{Ot-Po1}.

\item The second author
(V.P.) has developed a program \cite{Po_QSF1_Geom-Ded, Po_QSF2, Po_QSF3}  aiming to prove  that \textbf{all} finitely presented groups  are \textsc{qsf}.

\item From papers like \cite{Po2-duke, Po3-JDG, Po5-Top2}, it 
can be extracted a proof of the following general form:  if  a  finitely presented group  satisfies a nice geometric condition (like e.g. Gromov-hyperbolicity, almost-convexity, automaticity, combability etc), then it is easy.

\end{enumerate}
\end{rem}

Coming back now to the \textsc{representations}, with which this
paper deals, they were
already present in the paper \cite{Po4-Top1}, where homotopy 3-spheres $\Sigma^3$  were \textsc{represented}.
Then, in \cite{Po2-duke, Po3-JDG, Po5-Top2, Po-contMath}, 
 universal covering spaces of  compact 3-manifolds have been 
\textsc{represented}, while in \cite{PT-Wh},
\textsc{representations} of the classical \textit{Whitehead
manifold} $Wh^3$ \cite{Wh} were investigated and what was found
there, was that for the simplest and most natural
$2^d$-\textsc{representations} of it, $X^2 \overset {f}
{\longrightarrow } Wh^3$, the  $M_2 (f) \subset X^2$ is
\textbf{not} a closed subset. (Hence, the $M_2 (f)$ being closed can be viewed as on obstruction  for a complex to admit a cocompact free action of an infinite group).

Note also that the notion
of \textsc{representation} of a group $\Gamma$, $X
\overset {f} {\longrightarrow } \widetilde {M(\Gamma)}$, 
has a-priori nothing group-theoretical about it, except that it allows the possibility
of a free action $\Gamma \times X \longrightarrow  X$, with an
\textit{equivariant} $f$, i.e. $f(gx) = gf(x)$; the point (ii) in Theorem
\ref{thm0v1} brings this option to life.

In the next section we will state more formally, and with more
details, what the paper actually proves. Then Theorem \ref{thm0v1}
will appear as a piece of some bigger, more comprehensive
statement. This will deal with $3$-dimensional \textsc{representations} too,
and then the ``\textit{Whitehead nightmare}'' appearing in the
title of this paper will be explained too.

\vspace{0.3cm}

\textbf{Acknowledgments:} We wish to thank Louis Funar, David
Gabai, Fr\'ed\'eric Haglund and Barry Mazur for very useful
conversations and remarks. In particular Louis Funar helped us to fix up a first version of our paper. We are also very grateful to the referee for his corrections and for his 
appropriate and constructive suggestions  to improve the readability of the paper.

 The first author (D.O.) was funded
by a grant from the Research Council of Lithuania (Researcher teams' project No. MIP-046/2014/LSS-580000-446). He also thanks the
Mathematics Department of the University of Paris-Sud 11 for the
hospitality during  research visits.

\section{Definitions and statements of the results}

We will give now, with full details, the definition of the
\textsc{representations} for finitely presented groups $\Gamma$,
which were only very informally presented in the last section.

To begin with, like in \cite{Po_QSF1_Geom-Ded},
we consider (topological) presentations for $\Gamma$ which are singular compact 3-manifolds
with non-empty boundary, denoted by  $M(\Gamma)$. The structure of such an
$M(\Gamma)$ is very simple (see e.g.  \cite{Ot-Po1}).
Start with a compact 3-dimensional
handlebody of some appropriate genus $g$, call it $H$; this
embodies the generators of the group $\Gamma$. Then 2-handles are
attached to $H$, embodying the relations of $\Gamma$. Explicitly, the attaching
zones are given by an \textbf{immersion}
$$ \sum_{j=1} ^k (S_j^1 \times [0,1])
\overset {\phi} {\longrightarrow} \partial H , \e{(2)}$$
which injects on each individual $S^1_j \times I$, the
double points coming from (singular) little squares $S \in
\partial H$, where $\phi (S^1_l \times I)$ and $\phi (S^1_m \times
I)$, for $m\neq l$, go through each other. These \emph{immortal
singularities} $S$ are the points where $M(\Gamma)$ fails to be a
3-manifold.

Now we are ready to give the precise and formal definition of
\textsc{wgsc-representations} for finitely presented groups, leaving
more details and comments just after the definition.

\begin{defi}\label{repr}
A \textsc{wgsc-representation}  of a finitely presented group $\Gamma$ is a simplicial 
map $$ X \overset {f} {\longrightarrow} \widetilde {M(\Gamma)}, \e{(3)}$$
where $\widetilde {M(\Gamma)}$ is the universal cover of $M(\Gamma)$, which satisfies the following list of conditions:

\begin{itemize}
\item[(3$-$1)] the space $X$ is a countable simplicial complex
which is \textbf{not} necessarily assumed to be locally-finite; but it is assumed to be \textbf{weakly geometrically simply connected} ({\sc wgsc});

\item[(3$-$2)] the simplicial map $f$ is \textbf{non-degenerate}, which also means that
$\dim X \leq 3$. Hence, once the meaningless case $\dim X=1$ is
discarded, we are left with the two meaningful cases $\dim X=2$
and $\dim X=3$, namely with 2- and 3-dimensional 
\textsc{representations};

\item [(3$-$3)] the equality $\Psi (f) = \Phi (f)$ holds (see the
explanation here below), and in this case we say  that $f$ is
\bi{zippable};

\item [(3$-$4)] the map $f$ is ``essentially surjective'',
which means the following: if $\dim X=3$, then
$\overline{\mbox{Im}\ f}= \widetilde {M(\Gamma)}$, and if $\dim
X=2$, then $\widetilde {M(\Gamma)} = \overline{\mbox{Im}\ f} + \{
\mbox {cells of dimension 2 and 3}  \}$.
\end{itemize}
\end{defi}

Here, some remarks and details are needed. First of all, concerning the point (3-3) above, consider a
non-degenerate simplicial map $g: A \to B$, like, for instance,
our map $f$ from (3); for any such a map we define the set of
\emph{mortal singularities}, $Sing (g) \subset A$, as being the
set of those points $x \in A$, at which $g$ fails to be immersive.
There  are two interesting equivalence relations on $A$, in this
context. To begin with, we have the trivial one  $ \Phi (g)
\subset A \times  A$,  where  $(x,y) \in \Phi (g)
\Longleftrightarrow gx = gy$. Then (and see here \cite{Po1-duke,
Po_QSF1_Geom-Ded} for more details) there is the following more
subtle equivalence relation $\Psi (g) \subset \Phi (g)$, which is
defined as follows (and it can be proved that this definition
makes sense, see \cite{Po1-duke}): $\Psi (g) \subset A \times A$
is the ``smallest'' equivalence relation compatible with $g$,
which kills all the mortal singularities, i.e. which is such that
in the following diagram the map $g_1$ is an immersion (i.e. $Sing
(g_1) = \emptyset$) $$\begin{diagram}
A &&\rTo^g(4,2)&&B \\
    &\rdTo_{\pi}& &\ruTo_{g_1}\\& & A / \Psi (g) .\end{diagram} $$
It can be shown that there is a \textbf{uniquely} well-defined equivalence
relation $\Psi (g)$ (constructed via a sequence of \textit{folding maps}) with
the properties listed above, and that it has the additional
property that the following induced map is \textbf{surjective} $$\pi_1 (A)
\overset {(g_1)_{\ast}} { \xrightarrow{\hspace*{1cm}} } \pi_1 (A / \Psi (g)).$$
Details concerning the equivalence relations $\Psi$ and $\Phi$ can
be found in \cite{Po1-duke, Po_QSF1_Geom-Ded}.

\begin{rem}
\begin{itemize}
\item  It should be stressed that, the general definition of
\textsc{representation} $X \overset{f}{\rightarrow} Y$ is such
that the object $Y$ which is \textsc{represented}, automatically
comes with $\pi_1 Y=0$.

\item For any finitely presented
group,  it can be shown that \textsc{representations}  
always exist \cite{Po_QSF1_Geom-Ded}; but usually, the simplest
\textsc{representations} which one stumbles upon \textbf{fail} to be locally finite. 
 On the other hand, in this paper, only \textsc{representations} of groups with a 
\textbf{locally-finite} $X$ will be considered.

\item 
Many other objects can be \textsc{represented},  provided they
are simply connected. The definition is always exactly the same,
but what is special when one represents groups, which comes
automatically with the canonical action $\Gamma \times \widetilde
{M(\Gamma)} \longrightarrow \widetilde {M(\Gamma)}$, is that there is
then the possibility that the \textsc{representation} $ X
\overset{f}{ \longrightarrow} \widetilde {M(\Gamma)}$ may be
{\textbf{\textit{equivariant}}}, meaning that there may be a second free
action $\Gamma \times X \longrightarrow X$, coming with $f(\gamma
x)= \gamma f(x)$ for all $\gamma \in \Gamma , \ x\in X$. 
\end{itemize}
\end{rem}

Without any additional assumption on the triple $ X
\overset{f}{ \longrightarrow} \widetilde {M(\Gamma)}$ from (3) above,
there is a \textbf{metric structure}, well-defined up to \textit{quasi-isometry},
which permeates this whole story. Chose any Riemannian metric on
$M(\Gamma)$, and what we mean by this is the following. On each
individual 3-dimensional handle $H^\lambda _i$ of $M(\Gamma)$, a
Riemannian metric is given and, whenever two handles are incident,
it is required that the induced metrics on the intersection should
coincide. Then, using the non trivial free group action $\Gamma
\times \widetilde {M(\Gamma)} \longrightarrow \widetilde {M(\Gamma)}$,
the arbitrarily chosen Riemannian metric on $ M(\Gamma)$ lifts to
an equivariant metric on $\widetilde {M(\Gamma)}$. Finally, one
lifts this metric on $X$, via the non-degenerate map  $f: X \to \widetilde {M(\Gamma)}$. Thus, $X$ becomes a
metric space and, up to quasi-isometry, this metric on $X$ is
\textit{canonical}, i.e. independent of the original choice of
Riemannian metric on $M(\Gamma)$.

Let us fix now a compact \textit{fundamental domain} $\delta \subset
\widetilde {M(\Gamma)}$, such that $\widetilde {M(\Gamma)} =
\bigcup _{\gamma \in \Gamma}  \gamma \delta$. 
In a similar vein, we consider ``\textit{large fundamental domains}'' $\Delta \subset X$,
and a locally finite decomposition of $X$ into such domains, 
$X = \bigcup _{j\in J} \Delta _j$, 
where $J$ is some countable set of indices. Since there is no
group action on $X$ (in the general case, at least), what we will
ask now from the compact pieces $\Delta_j$ above, apart from the
obvious condition that their interiors should be disjoined, is the
existence of two positive constants $C_2 > C_1 >0$ such that we
should have
$$C_1 \leq \| \Delta _j\| \leq C_2, \ \forall \ j\in J . \e (3-5)$$
Here $\| \Delta_j\|$ is the diameter of $\Delta_j$. Our large
fundamental domains $\Delta_j$ could be, for instance, maximal
dimensional cells of the cell-decomposition of the representation
space $X$ occurring in (3), satisfying the metric condition
(3$-$5), when $j\to \infty$. The next Theorem \ref{thm2}, stated
below, has two parts corresponding to the dimension of $X$, in a
3-dimensional \textsc{representation} this is $X=X^3$, while in a
2-dimensional \textsc{representation} it is $X=X^2$. In both cases
we have also \textit{immortal singularities}, 
\textit{Sing}$\big(\widetilde {M(\Gamma)}\big)\subset \widetilde {M(\Gamma)}$, 
and \textit{mortal singularities}, \textit{Sing}$(f)\subset X$.

At least in the 2$^d$ case, we will want to be a bit more specific
about the singularity issues, and so, when it comes to the 2-dimensional
part of the Theorem  \ref{thm2} stated below,
the following condition will be imposed too
$$ \mbox{the set of mortal singularities \textit{Sing}}(f)\subset X^2
\mbox { is \textbf{discrete} and, }\e (3-6)$$ at each $x\in$
{\textit{Sing}}$(f)$, there is the following local model. There is
an open neighborhood $P=P_1 \cup P_2$ of $x$ in $X^2$ and an
embedding $\mathbb R^3 \longrightarrow \widetilde {M(\Gamma)}$ \big(which,
a priori, might happily go through \textit{Sing} $ \widetilde
{M(\Gamma)}$\big),  through which $P \overset{f}{\longrightarrow}
\widetilde {M(\Gamma)}$ factorizes. At the source $X^2$, the $P_1,
P_2$ are two planes $\mathbb R^2$ glued along a half-line $[0,
\infty)$ with $x=0$, $x$ being here our mortal singularity.

In the diagram below
$$\begin{diagram}
P & &\rTo^j(4,2)& & \mathbb R^3 \\
&\rdTo_f& &\ldTo\\
& & \widetilde {M(\Gamma)}
\end{diagram} $$

\noindent each $j|_{P_1}, j|_{P_2}$ injects, the two being transverse.
So, there is a double line in $M_2(f)$ starting at the mortal
singularity $x$. This is a local model already used by the second
author (V.P.) in   \cite{Po4-Top1}, where,  according to a suggestion of Barry Mazur, these singularities were called ``\textit{undrawable}''.

For our \textsc{representation} (3), we will also assume that
$$ f\big(\mbox{\textit{Sing}}(f)) \cap \mbox{\textit{Sing}}
(\widetilde M(\Gamma)\big)= \emptyset . \e (3-7)$$
But, at the later stages in the zipping of $f$, this condition may
be violated. Then, besides the \textit{Sing}$(f) \subset X^2$,
there is also another set of immortal singularities, \textit{Sing}$(X^2)
\subset X^2 - \mbox{\textit{Sing}}(f)$, which is also \textbf{discrete}.
This comes with the inclusion $f\big(\mbox{\textit{Sing}}(X^2)\big) \subset
\mbox{\textit{Sing}}\big(\widetilde {M(\Gamma)}\big )$. At the points $x\in
\mbox{\textit{Sing}}(X^2)$, there are no local factorizations
$$\begin{diagram}
& & V \subset \mathbb R^3 \\
& \ruTo &  & \rdTo \\
X^2 \supset U & & \rTo^f(4,2)& & \widetilde {M(\Gamma)},
\end{diagram} $$
and it is their absence which makes the $x\in
\mbox{\textit{Sing}}(X^2)$ be an immortal singularity, never to be
killed by the zipping. But, in purely topological terms, and
forgetting about $f$, at one immortal singularity $x\in X^2$, the
$X^2$ looks exactly alike as at a mortal singularity. This ends
our digression on $\mbox{\textit{Sing}}(f)$.

\vspace{0.2cm}

We are now ready to state with all details the main result  of the
present paper.

\begin{thm}[\bf{Main Theorem}]\label{thm2}\
\begin{enumerate}

\item (3$^d$-part)  For any finitely presented \textsc{qsf} group $\Gamma$, there exists a
locally finite 3$^d$ \textsc{wgsc-representation}, $X^3 \overset
{f} {\longrightarrow} \widetilde {M(\Gamma)}$, such that the following conditions are satisfied for any $\gamma \in
\Gamma$:
$$ \mbox{ there is a free action } \Gamma \times X^3 \longrightarrow X^3 \mbox {, and }
 f \mbox { is \textbf{equivariant}};  \e(4)$$
$$ \mbox{ there is a constant } C = C(\delta) > 0  \mbox { s.t. }
\forall \gamma \in \Gamma \mbox { one has: } \# \{\Delta _i
\mbox { s.t. } f(\Delta_i) \cap \gamma \delta  \} < C. \e(5)$$

In particular, any given domain $\gamma \delta \subset \widetilde {M(\Gamma)} =
\bigcup _{\gamma \in \Gamma}  \gamma \delta$ 
downstairs, can only be hit \textbf{finitely many} times by the image of a large domain
$\Delta \subset X^3 = \bigcup _{j\in J} \Delta _j$ from upstairs.

\item (2$^d$-part) For any finitely presented \textsc{qsf} group $\Gamma$, there exists a
locally finite 2$^d$ \textsc{wgsc-representation} $X^2 \overset
{f} {\longrightarrow} \widetilde {M(\Gamma)}$ which is both
\textbf{equivariant}, like in (4), and which also satisfies the following condition:
$$ \mbox{ both \textit{Im}}(f)=f(X^2) \subset \widetilde {M(\Gamma)}
 \mbox { and also } M_2(f)\subset X^2 \mbox { are \textbf{closed} subsets}.
 \e(6)$$
\end{enumerate}
\end{thm}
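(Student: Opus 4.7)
The plan is to build the $3^d$-representation first, using the \textsc{qsf} hypothesis together with an equivariance-enforcing procedure, and then to deduce the $2^d$-representation by a controlled ``flattening'' of $X^3$. First I would fix a locally finite equivariant cell decomposition $\widetilde{M(\Gamma)}=\bigcup_{\gamma\in\Gamma}\gamma\delta$ and, for each fundamental piece $\gamma\delta$, use the \textsc{qsf} property applied to a growing neighborhood of $\delta$ to produce a compact simply connected complex $K=K(e)$ with a simplicial map $K\to\widetilde{M(\Gamma)}$ satisfying the Dehn condition, and then transport this single choice equivariantly by setting $K(\gamma)=\gamma K$. This guarantees the collection $\{K(\gamma)\}_{\gamma\in\Gamma}$ is $\Gamma$-equivariant by construction.

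Next I would glue these pieces together to produce $X^3$: whenever $\gamma\delta$ and $\gamma'\delta$ share a codimension-one face, the corresponding copies $K(\gamma)$ and $K(\gamma')$ are identified along canonical copies of that face, again in a $\Gamma$-equivariant manner. The resulting space $X^3$ is locally finite, inherits a free $\Gamma$-action by permutation of the tiles, and the induced map $f:X^3\to\widetilde{M(\Gamma)}$ is automatically equivariant, non-degenerate, and essentially surjective, yielding (4). A \textsc{wgsc} exhaustion is obtained by choosing a growing sequence of finite connected subsets $F_n\subset\Gamma$ and considering $X^3_n=\bigcup_{\gamma\in F_n}K(\gamma)$; simple connectivity at each stage comes from gluing simply connected pieces along the simply connected walls dictated by the Dehn condition. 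Zippability is then installed by the folding machinery of \cite{Po1-duke, Po_QSF1_Geom-Ded}, applied locally within each $K(\gamma)$ and matched across walls using equivariance. Condition (5) is automatic: the number of large domains $\Delta_j\subset X^3$ whose image meets a given $\gamma\delta$ is, up to the $\Gamma$-action, bounded by the finite number meeting $\delta$.

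For the $2^d$-part, I would replace each 3-cell of $X^3$ by a 2-dimensional spine, choosing the spines so that the new mortal singularities match the ``undrawable'' local model (3$-$6): two planes glued along a half-line, projecting into $\mathbb R^3\hookrightarrow\widetilde{M(\Gamma)}$. The collapse can be performed $\Gamma$-equivariantly on a tile $K(e)$ and propagated by the action, so that the resulting $X^2$ is locally finite, equivariant, \textsc{wgsc}, and still essentially surjective in the sense of (3$-$4). The closedness assertions in (6) then follow from equivariance and cocompactness: $f$ restricted to the compact fundamental tile $K(e)$ has compact image and compact double-point set, and both $f(X^2)$ and $M_2(f)$ are $\Gamma$-invariant unions of their restrictions to this tile; since $\Gamma$ acts properly on both $X^2$ and $\widetilde{M(\Gamma)}$, such locally finite $\Gamma$-invariant unions are closed. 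This is the precise mechanism by which equivariance \emph{rescues} closedness, contrasting with the Whitehead example of \cite{PT-Wh} where the absence of a cocompact action allowed $M_2(f)$ to be non-closed.

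The main obstacle I foresee lies in step one: enforcing equivariance \emph{simultaneously} with the Dehn and zippability conditions. The \textsc{qsf} approximation $K(e)$ is only defined relative to the chosen neighborhood of $\delta$, and the naively translated copies $\gamma K$ will in general disagree along shared walls, since the Dehn data at a wall depends on material from both sides. Overcoming this requires either a preliminary $\Gamma$-equivariant refinement of the \textsc{qsf} approximation (legitimate because \textsc{qsf} is a presentation-independent property of $\Gamma$ itself by \cite{BM1}, so we may first replace $M(\Gamma)$ by a presentation whose universal cover is \textsc{wgsc} via \cite{FO2} and then work with a genuinely $\Gamma$-equivariant filtration), or an iterative zipping/folding procedure on the overlaps that converges to a coherent equivariant limit. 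Getting the two conditions $\Psi(f)=\Phi(f)$ and $\Gamma$-equivariance to hold at the same time, without losing local finiteness, is where the bulk of the work should go.
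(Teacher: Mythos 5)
There is a genuine gap at the heart of your construction, and it is the one you yourself flag in your last paragraph: the tiles $K(\gamma)=\gamma K$ cannot be glued along shared walls in the way you describe. The \textsc{qsf} approximation $K$ of a neighborhood of $\delta$ is an \emph{abstract} compact complex equipped with a map to $\widetilde{M(\Gamma)}$ whose image is in no way localized over $\delta$; there is no ``canonical copy'' of a wall inside $K(\gamma)$ to identify with one inside $K(\gamma')$, the walls of a fundamental domain need not be simply connected (so gluing simply connected tiles along them does not yield simply connected stages $X^3_n$), and the Dehn condition constrains only the double points over the initial compactum, not the behaviour over walls. Since you name this obstacle but do not overcome it, the proposal is not a proof. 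In addition, ``installing'' zippability by folding changes the space, after which \textsc{wgsc}, local finiteness and non-degeneracy would all have to be re-verified; zippability is a property one must check for the map actually constructed.

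The paper avoids the gluing problem entirely by never assembling local \textsc{qsf} data: it works \emph{downstairs} on a compact quotient and lifts at the very end. Concretely, it resolves the singularities of $M(\Gamma)$ to a parallelizable $4$-manifold $Y^4=\Theta^4\big(M(\Gamma),\mathcal R\big)$, stabilizes by balls so that $W^p=M^n\times B^N$ is globally \textsc{wgsc} (Lemma \ref{lemA}, the known ``\textsc{qsf} implies stably \textsc{wgsc}'' theorem --- the rigorous version of your parenthetical appeal to \cite{FO2}), reduces to one-ended groups (Lemma \ref{ends}), and then builds two triangulations of the compact $Y^4\times B^{N_1}$ --- one affine, one adapted to a codimension-one foliation whose singular leaf is $M(\Gamma)$ --- admitting isomorphic admissible subdivisions via the smooth Hauptvermutung and Siebenmann's bisections (Lemmas \ref{lemC}--\ref{lemF}). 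The representation $X^3\to\widetilde{M(\Gamma)}$ is the lifted $3$-skeleton; equivariance and condition (5) are then automatic because everything is pulled back from a compact quotient with compact fibers (here your reasoning for (5) does agree with the paper's), \textsc{wgsc} is preserved because all subdivisions are admissible, and zippability follows from the existence of a section $\widetilde{M(\Gamma)}\subset X^3$ on which $f$ is the identity --- a feature your construction lacks. For the $2^d$ part the paper does not collapse $3$-cells to spines but selects a dense $2$-skeleton from three foliations attached to the handles of $X^3/\Gamma$; your mechanism for the closedness in (6) (a locally finite $\Gamma$-invariant union pulled back from a finite complex) is essentially the paper's, but it presupposes precisely the compact-quotient construction that your first step fails to supply.
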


\begin{rem}
For a generic 3$^d$-\textsc{representation}  $X^3 \overset {f}
{\longrightarrow} \widetilde {M(\Gamma)}$, one normally finds the
following situation, at the opposite pole with respect to our (5)
above, and which, in papers like \cite{Po-contMath}, the second
author (V.P.) has called the \textit{Whitehead nightmare}
$$ \# \big \{ \Delta _i , \mbox { s.t. } f(\Delta _i) \cap
\gamma \delta \neq \emptyset \big\} = \infty, \ \forall \gamma \in
\Gamma . \e (5^{\ast})$$
Our present Whitehead nightmare under discussion, should remind
the reader of the basic structure of the classical Whitehead
manifold $Wh^3$ \cite{Wh} (whence the name of our nightmare), of
the \textit{Casson Handle} \cite{Gu-Ma}, or of the \textit{gropes} of M.
Freedman and F. Quinn \cite{Fr-Qu}.
\end{rem}

So, the first part of our Theorem  means that \textsc{qsf}
finitely presented groups can \textbf{avoid} the
Whitehead nightmare, and this is what the title of the present
paper refers to.

The 2$^d$ counterpart of the Whitehead nightmare (5$^\ast$) is the
following condition
$$ M_2(f) \subset X^2  \mbox { is \textbf{not} closed}. \e (6^\ast)$$
This \textbf{is} the generic situation for
2$^d$-\textsc{representations} and one has to start by living with
it and look at the accumulation pattern of $M_2(f)$ inside $X^2$,
all this being studied in \cite{Po_QSF1_Geom-Ded, Po_QSF2, Po_QSF3}.

\begin{rem}
\begin{enumerate}

\item The representation spaces $X$ occurring in the two points 
above are, of course, distinct spaces, although not quite
totally unrelated, as we shall see.

\item Condition (5) of Theorem \ref{thm2} can also be replaced by the following
variant: there exist \textbf{equivariant triangulations} for
$\widetilde {M(\Gamma)}$ and for $X^3$, and also a constant $C'$
such that, for any simplex $\sigma \subset \widetilde {M(\Gamma)}$,
we should have
$$ \# \big \{\mbox {simplexes } S \subset X^3 \mbox {, s.t. }
f(S) \cap \sigma \neq \emptyset  \big\} < C'. \e(5-bis)$$

\end{enumerate}
\end{rem}

\section{Preliminaries lemmas}

We give now the beginning of the proof  of  Theorem 
\ref{thm2}.  Some technicalities will be postponed until the
next section. Since $\Gamma$ is \textsc{qsf}, this also means that
$\widetilde {M(\Gamma)} \in $ \textsc{qsf} (because $M(\Gamma )$ is a compact 2-complex associated to a presentation
of $\Gamma$). Since our 3-dimensional complex $\widetilde {M(\Gamma)}$ has singularities, we prefer to replace it by a smooth, albeit higher dimensional, object.

Let $\mathcal R$ be a \textit{resolution} of the singularities of
$\widetilde {M(\Gamma)}$ (and see here \cite{Po4-Top1}, or  better,
our recent joint work \cite{Ot-Po1}, where all this issue is
explained in a context which is very much akin to the present
one). Given a choice of $\mathcal R$, we get a smooth 4-manifold
$$\Theta ^4 (M(\Gamma), \mathcal R),\e(7)$$ and, as soon as one takes the
product with $B^m$, for $m\geq 1$, and one goes to $\Theta ^4
(M(\Gamma), \mathcal R) \times B^m$, then the $\mathcal
R$-dependence is washed away, and everything becomes then
\textbf{canonical}. In particular, there is now a free action of $\Gamma$ on
 $\Theta ^4 (\widetilde {M(\Gamma)}, \mathcal R) \times B^m$, for
 $m\geq 1$, and one has that
 $$  \big ( \Theta ^4 (\widetilde {M(\Gamma)},
 \mathcal R) \times B^n \big ) / \Gamma =  \Theta ^4 (M(\Gamma), \mathcal R)
\times B^n.  \e(8)$$

\noindent We take now $n= m +4 \geq 5$, and then we get the manifold 
$$  M^n\overset{\textsc{def}}
=   \Theta ^4 (\widetilde {M(\Gamma)},
 \mathcal R) \times B^{n-4} \equiv \mbox{The universal cover of }\big ( \Theta ^4 (M(\Gamma), \mathcal R)
 \times B^{n-4} \big).\e{(9)}$$

\noindent This $M^n$ is a smooth non-compact manifold, of very large boundary. Also,
because $\Gamma \in$ \textsc{qsf}, we also have $M^n \in $ \textsc{qsf}.

\begin{lem}\label{lemA}
If $N >> n$, then the manifold $W^p  \overset{\textsc{def}}
=  M^n \times B^N$, for $p = n+N$, is
\textsc{wgsc}.
\end{lem}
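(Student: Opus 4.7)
\medskip

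\noindent\textbf{Proof plan.} The strategy is the standard ``stabilization trick'': QSF becomes WGSC after multiplying by a ball of sufficiently high dimension, because the extra $B^{N}$ factor provides enough room to resolve, by general position, the mortal singularities of the simply-connected approximations given by the QSF property. I will build an exhaustion $L_{1}\subset L_{2}\subset\dots$ of $W^{p}$ by compact simply connected subcomplexes.

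First I exhaust $M^{n}$ by a nested sequence of compact subcomplexes $k_{1}\subset k_{2}\subset\dots\subset M^{n}$ with $\bigcup_{i}k_{i}=M^{n}$. Because $M^{n}\in\textsc{qsf}$, Definition \ref{qsf} gives, for each $i$, a compact simply connected complex $K_{i}$, an inclusion $j_{i}\colon k_{i}\hookrightarrow K_{i}$ and a simplicial map $f_{i}\colon K_{i}\to M^{n}$ satisfying the Dehn condition $M_{2}(f_{i})\cap j_{i}(k_{i})=\emptyset$. Thus over the ``core'' $j_{i}(k_{i})$ the map $f_{i}$ is already an embedding, while the potentially wild behaviour of $f_{i}$ is confined to $K_{i}\setminus j_{i}(k_{i})$.

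Next, I use the $B^{N}$ factor. Since $\dim K_{i}\le\dim M^{n}=n$, as soon as $N>2n$ the standard general-position argument allows one to perturb the simplicial map
\[
(f_{i},0)\colon K_{i}\;\longrightarrow\;M^{n}\times B^{N}=W^{p}
\]
into a PL embedding $F_{i}\colon K_{i}\hookrightarrow M^{n}\times B^{N}$ which coincides with $(f_{i},0)$ on the subcomplex $j_{i}(k_{i})$ where $f_{i}$ is already injective (the Dehn condition guarantees that the perturbation need only be carried out on $K_{i}\setminus j_{i}(k_{i})$, where the double points of $f_{i}$ live). In particular, $F_{i}(K_{i})$ is a compact simply connected subcomplex of $W^{p}$ which contains $k_{i}\times\{0\}$.

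Finally I set
\[
L_{i}\;:=\;F_{i}(K_{i})\;\cup\;\bigl(k_{i}\times B^{N}\bigr)\subset W^{p},
\]
a compact subcomplex (after subdivision). It is simply connected: the piece $k_{i}\times B^{N}$ deformation-retracts onto $k_{i}\times\{0\}$, which sits inside the simply connected $F_{i}(K_{i})$, so by van Kampen $\pi_{1}L_{i}$ is a quotient of $\pi_{1}F_{i}(K_{i})=0$. The sequence $L_{1}\subset L_{2}\subset\dots$ is nested (after, if necessary, replacing each $L_{i}$ by $L_{1}\cup\cdots\cup L_{i}$, which remains simply connected by the same argument, the intersections $F_{i}(K_{i})\cap(k_{j}\times B^{N})$ being connected and meeting each piece in a simply connected set), and its union exhausts $W^{p}$ because $\bigcup_{i}(k_{i}\times B^{N})=M^{n}\times B^{N}=W^{p}$. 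This gives the desired WGSC exhaustion.

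The main technical obstacle is the general-position embedding step: one must arrange that the resolution of the double points of $f_{i}$ inside the added $B^{N}$ coordinates can be performed rel $j_{i}(k_{i})$ and produces a PL subcomplex of $W^{p}$ (after a common subdivision), so that $L_{i}$ is genuinely a subcomplex rather than only an immersed image. This is precisely where the hypothesis $N\gg n$ is used, and it is also the reason why the same construction cannot be carried out directly in $M^{n}$ itself --- there is simply not enough room to separate the double points of the QSF approximations.
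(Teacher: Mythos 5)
The paper itself gives no proof of Lemma \ref{lemA}: it declares the statement ``rather standard'' and refers to \cite{FG, FO2}. Your proposal reconstructs exactly the standard stabilization argument from those references (QSF data $k_i\subset K_i\to M^n$, general position in the extra $B^N$ directions rel the core, then assembling an exhaustion), so the overall strategy is the right one. However, as written there are two genuine gaps.

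First, you assert $\dim K_i\le \dim M^n=n$, but Definition \ref{qsf} places no bound whatsoever on the dimension of the abstract complex $K_i$; since a single $N$ must work for \emph{all} $i$, you need a uniform bound on $\dim K_i$ before any codimension count can begin. This is repairable (one can normalize the QSF data so that $K_i$ is obtained from $k_i$ by attaching cells of dimension $\le 2$, whence $\dim K_i\le\max(\dim k_i,2)\le n$), but the normalization has to be stated and justified. Second, and more seriously, the van Kampen step for $L_i=F_i(K_i)\cup(k_i\times B^N)$ does not go through as claimed. Since $F_i$ is a small perturbation of $(f_i,0)$, every point of $f_i^{-1}(\mathrm{int}\,k_i)$ lands inside $k_i\times B^N$ after the perturbation (the $B^N$ factor is the whole fiber, so there is nowhere to push it out to); hence $F_i(K_i)\cap(k_i\times B^N)$ is essentially $F_i\bigl(f_i^{-1}(k_i)\bigr)$, which is in general disconnected and not simply connected. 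A disconnected intersection of two pieces contributes free factors to $\pi_1(L_i)$, so simple connectivity of $L_i$ is exactly what is at stake, and your parenthetical remark asserts rather than proves it; the same objection applies to the claim that $L_1\cup\cdots\cup L_i$ stays simply connected. The standard repair is to reverse the order of operations: form the abstract complex $K_i'=K_i\cup_{j_i(k_i)}(k_i\times B^N)$, which is compact and simply connected by van Kampen applied \emph{abstractly} (here the intersection really is $j_i(k_i)\cong k_i$, and the Dehn condition confines the double points of the induced map $K_i'\to W^p$ to $K_i\setminus j_i(k_i)$), and only then embed $K_i'$ into $W^p$ by general position rel $k_i\times B^N$; alternatively one embeds $K_i$ and passes to regular neighborhoods. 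With that reorganization, and with nesting obtained by choosing $k_{i+1}$ to contain the $M^n$-projection of $L_i$, your argument becomes correct.
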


\begin{proof}
The proof of this lemma is rather standard  (see e.g. \cite{FG, FO2}). Hence we omit it.
\end{proof}

\begin{lem}\label{ends}
It suffices to prove Theorem \ref{thm2} in the case of  one-ended groups.
\end{lem}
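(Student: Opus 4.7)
The plan is to reduce from the general finitely presented \textsc{qsf} setting to the one-ended setting by appealing to the accessibility theorem of Dunwoody, which asserts that any finitely presented group $\Gamma$ is the fundamental group of a finite graph of groups $(\mathcal{G}, T)$ whose edge groups are all finite and whose vertex groups have at most one end. Combined with Stallings' structure theorem for groups with more than one end, and with the fact (essentially due to Brick--Mihalik \cite{BM1}) that the \textsc{qsf} property descends from $\Gamma$ to the vertex groups of such a splitting, this reduces the problem to an inductive gluing argument over the edges of the underlying graph.

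First I would dispose of the degenerate cases. If $\Gamma$ is finite, then $\widetilde{M(\Gamma)}$ is itself compact, and a trivial equivariant \textsc{wgsc-representation} by the identity map satisfies all the conclusions of Theorem \ref{thm2}. Having done this, I assume $\Gamma$ is infinite; if it already has one end there is nothing to prove. Otherwise I apply Dunwoody's decomposition and induct on the number of edges of $T/\Gamma$. At each step, passing to a vertex group strictly simplifies the decomposition while preserving \textsc{qsf}, so by the induction hypothesis each vertex group $\Gamma_v$ admits an equivariant, locally finite \textsc{wgsc-representation} $f_v: X_v \to \widetilde{M(\Gamma_v)}$ with the conclusions of Theorem \ref{thm2} (in both the $3^d$ and $2^d$ flavours).

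Next, I would construct a model 3-complex $M(\Gamma)$ by assembling pieces: build an $M(\Gamma_v)$ for each vertex and a mapping cylinder $M(\Gamma_e)\times[0,1]$ for each (finite!) edge group, and glue these according to the quotient graph. The universal cover $\widetilde{M(\Gamma)}$ then acquires the structure of a tree of copies of the $\widetilde{M(\Gamma_v)}$'s glued along lifts of the \emph{compact} subcomplexes corresponding to the edge groups. Onto each such vertex copy I transplant the given equivariant \textsc{representation} $f_v$, and then glue the copies (translated around by $\Gamma$) along the preimages under $f_v$ of these compact edge pieces. The result is a $\Gamma$-equivariant simplicial complex $X$ with an equivariant map $f: X \to \widetilde{M(\Gamma)}$. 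One then checks: \textsc{wgsc}-ness survives because a tree-like amalgam of \textsc{wgsc} complexes glued along compact subcomplexes is again \textsc{wgsc} (exhaust by assembling finite subtrees of the vertex exhaustions); local finiteness and the counting bound~(5) follow from the finiteness of the edge groups together with the inductive bounds; and closedness of $M_2(f)$ and of $f(X^2)$ in the $2^d$ case follows from the corresponding properties of the $f_v$, since double points cannot accumulate across gluing loci indexed by finite edge stabilisers.

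The main obstacle will be the gluing step itself, and more precisely ensuring that (a) the zippability condition $\Psi(f)=\Phi(f)$ survives the amalgamation---a priori, identifying source points along finite edge pieces could force additional folding relations---and (b) the discrete local model of $\mathrm{Sing}(f)$ demanded in (3$-$6) is preserved across the gluing interfaces. Both can be arranged by choosing the equivariant triangulations compatibly on the compact edge pieces before gluing, so that no new mortal or immortal singularities are introduced on the interfaces; but the book-keeping involved is the only genuinely delicate point in the reduction.
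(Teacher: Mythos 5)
Your proposal is correct in substance and follows the same basic strategy as the paper: dispose of the finite case by the identity \textsc{representation}, split the many-ended case over finite subgroups into pieces with at most one end, and reassemble the \textsc{representations} of the pieces tree-likewise along the (compact) edge pieces in $\widetilde{M(\Gamma)}$. The one genuine difference is the decomposition tool. The paper invokes Stallings' structure theorem once and asserts that $\Gamma$ ``is gotten by amalgamation from one or two groups $G$ with $e(G)=1$ and a finite group $F$''; strictly speaking a single application of Stallings only yields a splitting over a finite subgroup whose factors need not be one-ended, so your appeal to Dunwoody's accessibility theorem (finitely many edges, vertex groups with at most one end, finite edge groups) together with an induction on the number of edges of the quotient graph is the more rigorous framing of the same idea, and it also absorbs the two-ended case that the paper handles separately as a finite extension of $\mathbb{Z}$. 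Where the paper is cleaner than your sketch is at the gluing interfaces: rather than ``choosing triangulations compatibly,'' it simply arranges that $f$ restricted to $f^{-1}\widetilde{M(F)}$ is the \emph{identity} over each edge piece, so the gluing loci carry no singularities and no double points at all, which disposes at once of your worries (a) and (b) about zippability and the local model (3$-$6) surviving the amalgamation. You should also note that your reduction, like the paper's, tacitly needs that the vertex groups of the splitting inherit \textsc{qsf} from $\Gamma$; you flag this correctly as a Brick--Mihalik-type fact, and it is indeed the direction of their invariance results that the argument requires.
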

\begin{proof}
When $e(\Gamma)=0$, then $\Gamma$ is finite, $\widetilde{M(\Gamma)}$ is compact, and the canonical \textsc{representation} $id: \widetilde{M(\Gamma)}  \to \widetilde{M(\Gamma)}$ (i.e. with $X=\widetilde{M(\Gamma)}$ and $f=id$) satisfies our Main Theorem for $\Gamma$. 

When $e(\Gamma)=2$, then we have a very good explicit description of $\Gamma$ (as finite extension of $\mathbb Z$), with which the Main Theorem for $\Gamma$ is easily proved, directly.

Finally, when $e(\Gamma)=\infty$, we need to appeal to the celebrated theorem of J. Stallings (see \cite{St_libro, Po_ends}), which tells us that $\Gamma$ is gotten by amalgamation from one or two groups $G$ with $e(G)=1$ and a finite group $F$ (with $e(F)=0$). Now, $\widetilde {M(G)}$ contains (a lot of) copies of $\widetilde {M(F)}$.   For each $G$ with $e(G)=1$, assuming that Theorem \ref{thm2} holds for one-ended groups, we have a \textsc{wgsc-representation} $f: X \to \widetilde {M(G)}$ like in the Main Theorem and it may be assumed that, for each $\widetilde {M(F)} \subset \widetilde {M(G)}$, the map $f|_{f^{-1} \widetilde {M(F)}}$ is the identity map. 

Now, we got the $\widetilde{M(\Gamma)}$ by taking infinitely many copies of the $G$'s, each coming with its \textsc{wgsc-representation} $f_i : X_i \to \widetilde{M(G)_i}$ like in the Main Theorem, and then $\widetilde{M(\Gamma)}$ is an infinite tree-like union of these $\widetilde{M(G)}_i$'s, glued along the common $\widetilde{M(F)}$'s.

With these things the following map $$\underset {f^{-1} \widetilde {M(F)}} {\bigcup} X_i \overset {\cup f_i}{ \xrightarrow{\hspace*{1cm}}} \underset{ \widetilde{M(F)}}{\bigcup} \widetilde{M(G)_i} =\widetilde{M(\Gamma)}\e (10)$$ is a \textsc{wgsc-representation} of $\Gamma$ satisfying Theorem \ref{thm2}. 
\end{proof}

Now, Lemma \ref{lemA} tells us that there is an exhaustion
by compact, simply-connected, codimension zero submanifolds,
each embedded in the interior of the next
$$  K_1 \subset K_2 \subset \cdots \subset W^p = \cup _{1} ^{\infty } K_i .\e (11)$$
While, by Lemma \ref{ends}, we can suppose  $e(\Gamma)=1$. Hence, $W^p$ has one end too, and so the sets   $$(\partial K_i - \partial W) \  \mbox{ are disjoined, and, for each  $i$, both}   \e(12)$$ $$(\partial K_i - \partial W) \ \mbox{ and } \ (K_{i+1} - K_i)\  \mbox{ are connected}.\e(13)$$

\begin{lem}\label{lemC}
Given $\Gamma$, we can chose our presentation $M(\Gamma)$ so that,
for any desingularization $\mathcal R$, the smooth 4-manifold $Y^4 \overset{\textsc{def}}
= \Theta ^4 \big(M(\Gamma), \mathcal R \big)$ is 
parallelizable and there is
a smooth submersion
$$ Y^4 
\overset {\phi_1} {\longrightarrow} \mathbb R^4.\e (14)$$
\end{lem}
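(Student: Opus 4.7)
The plan is to separate the two conclusions: first arrange parallelizability of $Y^4$ by a careful initial choice of presentation $M(\Gamma)$, and then deduce the existence of $\phi_1$ from Phillips' submersion theorem for open manifolds.

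For parallelizability, I would start from the explicit structure of $M(\Gamma)$ recalled around equation~(2): a genus-$g$ 3-handlebody $H$ (the generators) with 2-handles attached along the immersed framed circles $\phi$ (the relations). The 4-dimensional smoothing $Y^4 = \Theta^4(M(\Gamma), \mathcal R)$ is a thickening in which the immortal singularities of $\phi$ are resolved by pushing sheets apart in one extra codimension direction, so $Y^4$ is built only from $0$-, $1$-, and $2$-handles in dimension~$4$. Consequently $Y^4$ is homotopy equivalent to a $2$-complex, so $H^i(Y^4;\mathbb Z)=0$ for $i\geq 3$; the Euler and Pontryagin obstructions to a tangential trivialization (which live in $H^4$) automatically vanish, and orientability, arranged in the construction, gives $w_1=0$. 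The only remaining obstruction is $w_2\in H^2(Y^4;\mathbb Z/2)$, and the framings of the $4$-dimensional $2$-handles provide enough freedom to kill it. The concrete way I would execute this is to note that $H$ itself embeds in $\mathbb R^3$, the attaching immersion $\phi$ lands in $\partial H \subset \mathbb R^3$, and placing $\mathbb R^3 \subset \mathbb R^4$ supplies a fourth coordinate in which the double points of $\phi$ can be resolved transversally. The resulting $Y^4$ is then realized as an open subset of $\mathbb R^4$, which is manifestly parallelizable.

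For the submersion $\phi_1$, I would invoke Phillips' submersion theorem: since $Y^4$ has non-empty boundary it has no closed component, and hence qualifies as an open manifold for that h-principle. Phillips' theorem produces a smooth submersion $Y^4 \to \mathbb R^4$ out of any vector-bundle epimorphism $TY^4 \twoheadrightarrow T\mathbb R^4 \cong \epsilon^4$. In equal dimension~$4$, such a surjective bundle map between rank-$4$ bundles is automatically an isomorphism, and the trivialization of $TY^4$ obtained in the previous step provides exactly one. This yields $\phi_1$. In the embedded model above, $\phi_1$ can simply be taken to be the restriction of the ambient identity $\mathbb R^4\to\mathbb R^4$, and no h-principle is actually needed.

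The main obstacle is the parallelizability assertion, and in particular its \emph{uniformity in the desingularization} $\mathcal R$: different resolutions give \emph{a priori} different smooth $4$-manifolds, and one must check that the local sheet-pushing choices assemble into a globally coherent framing. The $w_2$-obstruction is the precise enemy, and while the abstract handle-framing argument handles it, the cleanest way to eliminate all $\mathcal R$-dependence at once is the explicit embedding into $\mathbb R^4$, which absorbs the resolution data into ambient isotopy and makes both parallelizability and $\phi_1$ visible simultaneously.
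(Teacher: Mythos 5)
Your overall architecture (trivialize $TY^4$, then apply the Smale--Hirsch/Phillips h-principle to obtain the submersion $\phi_1$) is exactly the paper's, and the second half of your argument is fine: the paper likewise deduces $\phi_1$ from a parallelization via the h-principle for submersions of open manifolds. The gap is in the first half, which is where all the content of the lemma sits. Your ``concrete'' route asserts that, after placing $H\subset\mathbb R^3\subset\mathbb R^4$ and separating the double sheets of $\phi$ in the fourth coordinate, $Y^4$ becomes an open subset of $\mathbb R^4$. This is not available for an arbitrary presentation: before one can even separate sheets, one needs the attaching annuli $\phi(S_j^1\times[0,1])\subset\partial H\subset\mathbb R^3$ to extend to immersions of the $2$-handles $D_j^2\times[0,1]$ into $\mathbb R^3$, and there is a genuine Smale--Hirsch obstruction (the regular homotopy class of the attaching map) that can be non-trivial. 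This is precisely why the statement reads ``we can chose our presentation'': the paper's Sublemma modifies the presentation by letting each $S_j^1\times[0,1]$ spiral around $H$, changing its regular homotopy class (but not its homotopy class, so $\Gamma$ is unchanged) until the extension exists, at the price of creating more immortal singularities. Even after this, the paper only obtains an \emph{immersion} $\Phi_0:M(\Gamma)\to\mathbb R^4$, not an embedding, and $Y^4$ is the immersed regular neighborhood; an immersion suffices to pull back the standard framing, but your ``open subset of $\mathbb R^4$'' claim is an overstatement that you neither need nor could justify in general.

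Your abstract obstruction-theoretic fallback has the same hole in a different guise. It is true that $Y^4$ is homotopy equivalent to a $2$-complex, so an oriented rank-$4$ bundle over it is determined by $w_2$, and $w_1=w_2=0$ gives parallelizability. But ``the framings of the $4$-dimensional $2$-handles provide enough freedom to kill $w_2$'' is not free: once $M(\Gamma)$ and $\mathcal R$ are fixed, $Y^4=\Theta^4(M(\Gamma),\mathcal R)$ is a specific manifold whose handle framings are not adjustable parameters; the only freedom is to change the presentation itself, and one must check that the allowed moves (which must preserve $\pi_1$ and the $3$-dimensional structure of $M(\Gamma)$) realize the required framing changes, uniformly in $\mathcal R$. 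The paper's submersion $\psi_0:M(\Gamma)\to\mathbb R^3$ does this work in one stroke: it yields a global tangent $3$-frame on $M(\Gamma)$ which, after the branches are pushed to $t=\pm1$ according to $\mathcal R$, completes to a $4$-frame of $T(Y^4)|_{M(\Gamma)}$ for \emph{every} desingularization $\mathcal R$ at once, and then extends over $Y^4$ by the retraction. You should rebuild your first step around the existence of that $3$-dimensional submersion and the regular-homotopy adjustment of the attaching maps.
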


\begin{proof}

Along each singular square $S \subset Sing \big(M(\Gamma)\big)$, the
$M(\Gamma)$ has three smooth branches
$$U_1 \subset H (= \mbox{the 3$^d$ handlebody}), \ U_2 \subset
D_{j_1}^2 \times [0,1], \ U_3 \subset D_{j_2}^2 \times [0,1],$$
coming with $S = S_{j_1}^1 \times [0,1] \cap S_{j_2}^1 \times
[0,1] \subset \partial H$, where $\partial D_j = S_j ^1$  \big (see (2)\big ).

Each of the $U_1 \cup U_2$ and $U_1 \cup U_3$ is a smooth
3-manifold, and,  for each $x
\in S$, there is a canonical identification $T_x (U_1 \cup U_2 ) =
T_x (U_1 \cup U_3)$, defining the $T_x \big(M(\Gamma)\big)$ for $x \in S$.
 For the smooth points of $M(\Gamma)$ this tangent space is
obvious.

\begin{sublem}\label{claim}
For each $\Gamma$, we can chose the
$M(\Gamma)$ so that there is a smooth submersion  into the Euclidean 3-space
$$M(\Gamma) \overset {\psi_0} {\longrightarrow } \mathbb R^3. \e(15)$$
\end{sublem}

\begin{proof}
Start with an arbitrary chosen
presentation for our $\Gamma$
$$M(\Gamma)_0 = H \cup \sum_{j=1}^k D_j ^2 \times [0,1]$$
where each $D_j ^2 \times [0,1]$ is glued to $H$  via the $\phi |_{
S_j ^1 \times [0,1]}$ in (2), with, of course, $S_j ^1 =
\partial D_j ^2$. Next, take any embedding $H \subset \mathbb
R^3$, the standard one if one wants, but it does not matter. If
$\phi (S_j ^1 \times [0,1])\subset \mathbb R^3$ extends now to a
submersion, we are ok, in the sense that our $H \subset \mathbb
R^3$ extends to a submersion of $H \cup D_j ^2 \times [0,1]$. If
\textbf{not}, we can change the embedding $S_j ^1 \times [0,1]
\subset H$ by letting it spiral around $H$ so that now we get a
regular homotopy class
$\phi | S_j ^1 \times [0,1] \longrightarrow \mathbb R^3$
which does extend to an immersion $D_j ^2 \times [0,1]
\longrightarrow \mathbb R^3.$

This process can be performed in such a way that the homotopy
class of $\sum _i ^k S_j ^1 \longrightarrow H $ should stay
unchanged.
Of course, more singularities $S$ get created, the $S_j ^1 \times
[0,1]$'s are only immersed and not embedded, but all this is ok.
\end{proof}

Sublemma \ref{claim} provides us with a smooth field of frames
$$ F^3(x) \in \big\{ \mbox{Frames of } T_x \big(M(\Gamma)\big) \big \}
\simeq SO(3), \e (16)$$

\noindent for each $x \in M(\Gamma)$. We consider now the composite map
$$ M(\Gamma) \overset {\phi_0} {\longrightarrow} \mathbb R^3 = \mathbb R^3 \times \{0\}
\subset  \mathbb R^4 = \mathbb R^3 \times (-\infty < t < +\infty), \e
(17)$$

\noindent starting from which, any desingularization $\mathcal R$ of
$M(\Gamma)$
$$ \{ U_2, U_3 \} \overset {\mathcal R_S} {\longrightarrow}
\{s,n\}, \e (18)$$ produces a smooth immersion
$$ M(\Gamma) \overset {\Phi _0 } { \longrightarrow}
\mathbb R^4,\e(19)$$ simply by pushing the $s$-branch in (18)
towards $t = +1$ and the $n$-branch towards $t=-1$. With this (as
explained in \cite{Ot-Po1, Po4-Top1}), we have
$$Y^4 \overset{\textsc{def}}
=  \Theta ^4 \big( M(\Gamma), \mathcal R\big) =
\{\mbox{the }4^d \mbox{ smooth regular neighborhood of }
 M(\Gamma), \mbox{ induced by } \Phi_0\}.\e(20)$$

\noindent At each $x \in M(\Gamma)$, the $(\Phi _0)_{\ast} F^3(x)$ (with
$F^3$ like in (16)) is a 3-frame of the tangent space
$T_{\Phi_0(x)} \mathbb R^4$. By adding appropriately a fourth
orthogonal vector, we can complete this 3-frame into an oriented
4-frame. This is then a trivialization of the tangent space
$T(Y^4)  | _{M(\Gamma)}$,  which then easily induces a parallelization for $Y^4$.  
Hence, via the $h$-principle for
immersions and/or submersions (which in this particular case
boils down to the standard Smale-Hirsh theory), it follows that there exists our 
 smooth submersion $\phi_1$. This ends Lemma \ref{lemC}.
\end{proof}

When the $\phi_1$ of (14) is extended to a larger version of
$Y^4$,
$$Y^4_1 \overset{\textsc{def}} = Y^4 \cup \big (\partial Y^4 \times [0,1) \big ) \supset Y^4$$
we get a locally finite \textbf{affine structure} on the extension
$Y_1 ^4$ of $Y_4$, i.e. a Riemannian (not necessarily complete)
metric with sectional curvature $K =0$. There exists also a second structure
on $Y^4$, namely a \textit{foliated structure}, to be described
next. Both the affine and the foliated structures are compatibles
with the natural \textsc{Diff} structure of $Y^4$.

Let $L_3 =  \partial Y^4$ and let
 consider the following natural retraction $r$ coming from (20):
$$\begin{diagram}[w=3em]
L^3  \subset   Y^4  \lowerarrow{r|_{L^3}} \overset{r} {\longrightarrow} \ M(\Gamma).
\end{diagram}\e (21)$$

\begin{lem}\label{lemD}\
\begin{enumerate}

\item The map $r|_{L^3}$ is simplicially non-degenerate, and, outside
of some very simple fold-type singularities, it is an immersion
into  $M(\Gamma)$.

\item There is an isomorphism
$$ (Y^4, L^3) \simeq \big (  M(\Gamma) \underset {L^3 \times \{0\}}
{\cup} L^3 \times [0,1], L^3 \times \{1\}\big ), \e(22)$$ where
the map $r|_{L^3 \times \{0\}}$ is used for glueing together
$M(\Gamma)$ and $L^3 \times [0,1]$.
\end{enumerate}

\end{lem}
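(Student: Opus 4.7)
The plan is to analyze $r|_{L^3}$ and the product structure on $Y^4$ via local models, starting at smooth points of $M(\Gamma)$ and then handling the singular squares $S\subset\mathrm{Sing}(M(\Gamma))$.

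First, since $Y^4$ is by construction (see (20)) a smooth $4$-dimensional regular neighborhood of the immersed $3$-complex $\Phi_0(M(\Gamma))\subset\mathbb{R}^4$, I would describe the boundary $L^3=\partial Y^4$ in local charts. At a smooth $3$-manifold point $x\in M(\Gamma)$, the immersion $\Phi_0$ is locally an embedding of $\mathbb{R}^3$ into $\mathbb{R}^4$; the regular neighborhood is $\mathbb{R}^3\times(-\varepsilon,\varepsilon)$ and $\partial Y^4$ is the disjoint union of two parallel copies $\mathbb{R}^3\times\{\pm\varepsilon\}$. Hence $r|_{L^3}$ is locally a $2$-to-$1$ covering that is a diffeomorphism on each sheet, so non-degenerate and immersive there. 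At an interior point of a singular square $S$, the desingularization $\mathcal R_S$ of (18) pushes two of the three branches $U_1,U_2,U_3$ to opposite sides ($t=+1$ and $t=-1$) in the fourth coordinate, so locally $Y^4$ is modelled on the thickening of two transverse smooth $3$-manifolds in $\mathbb R^4$ whose intersection is $S$; the boundary $L^3$ is then a union of two $3$-manifolds meeting along a fold locus that projects $2$-to-$1$ onto $S$, i.e.\ exactly a codimension-one fold in the sense of Whitney. This gives part (1).

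For part (2), I would construct the identification (22) by producing an explicit collar of $L^3$ in $Y^4$ and using $r$ to identify the rest with $M(\Gamma)$. Concretely, pick a smooth vector field $v$ on $Y^4$ that points inwards along $L^3$, is transverse to $L^3$, and is tangent to the fibres of $r$; this exists because in each local chart above we may use the radial vector field of the normal disk bundle of $\Phi_0(M(\Gamma))$, and these local pieces patch together by a partition of unity (the compatibility at the singular squares is automatic since the desingularization has separated the branches into transverse smooth pieces). The time-$1$ flow of a rescaled version of $v$ defines a collar embedding $L^3\times[0,1]\hookrightarrow Y^4$ with $L^3\times\{1\}=L^3$ and $L^3\times\{0\}$ mapping into a ``core'' copy of $M(\Gamma)$ via $r|_{L^3\times\{0\}}$. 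Outside this collar, $r$ is a homeomorphism onto $M(\Gamma)$. Gluing the collar back to $M(\Gamma)$ along $L^3\times\{0\}\overset{r}{\to}M(\Gamma)$ then yields the isomorphism (22).

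The delicate step, and the one I expect to be the main obstacle, is the analysis at the singular squares $S$. The fold behaviour of $r|_{L^3}$ and the existence of a globally defined transverse collar vector field need to be checked carefully at points of $S$, where three smooth branches of $M(\Gamma)$ meet. This is handled by exploiting the freedom in the desingularization $\mathcal R$: the recipe (18) which splits $\{U_2,U_3\}$ into $\{s,n\}$ and pushes them to $t=\pm 1$ makes each pair $U_1\cup U_s$ and $U_1\cup U_n$ into a smooth $3$-manifold in $\mathbb R^4$, and the regular neighborhood computation then reduces to two copies of the smooth-point model along $S$, joined along a fold. Once this local picture is in place, the rest is the standard fact that a smooth regular neighborhood of an embedded (or immersed-with-transverse-branches) complex is homeomorphic to the mapping cylinder of its boundary retraction, which gives (22).
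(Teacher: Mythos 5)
The paper itself gives no proof of this lemma---it is declared trivial and left to the reader---so there is no argument of the authors' to compare yours against; your proposal supplies exactly the kind of local-model analysis they presumably had in mind, and it is essentially correct. The smooth-point picture (two parallel boundary sheets, $r|_{L^3}$ a local diffeomorphism on each) and the mapping-cylinder identification via an inward collar vector field tangent to the fibres of $r$ are the standard regular-neighborhood facts needed for parts (1) and (2).

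One detail in your treatment of the singular squares should be corrected, though it does not affect the conclusion. You describe the local model of $Y^4$ near $S$ as the thickening of ``two transverse smooth $3$-manifolds whose intersection is $S$.'' That is not the right picture: the two smooth $3$-manifolds $U_1\cup U_2$ and $U_1\cup U_3$ of Lemma~\ref{lemC} share the entire branch $U_1$, not merely $S$. After the desingularization $\mathcal R_S$ pushes the $s$- and $n$-branches to $t=\pm1$, the correct cross-section of $\Phi_0\big(M(\Gamma)\big)$ in the $2$-plane transverse to $S$ is a \emph{tripod} (three arcs emanating from one point), whose regular neighborhood in that $2$-plane is a disk; the boundary circle of that disk retracts onto the tripod $2$-to-$1$ away from finitely many fold points. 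Crossing with the square $S$ recovers exactly the ``very simple fold-type singularities'' asserted in part (1), and the radial vector field of that disk neighborhood still patches with the smooth-point collars, so your construction of the collar for part (2) goes through once the local model is stated this way.
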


\noindent The proof is trivial, and  we left it to the reader. Lemma \ref{lemD} tells us that $Y^4$ admits a codimension-one \textbf{foliation}
$\mathcal F$, given by
$$ Y^4 = \bigcup _{t\in [0,1]} L^3_t,$$

\noindent where, for $t > 0$, we have $L_t ^3 = L^3$ and where $ L_0 ^3
\equiv M(\Gamma)$ is the unique \textbf{singular leaf}.

Returning now to the affine structure which $\phi_1$ (14) induces
on $Y_1 ^4 \supset Y^4$, we endow $\mathbb R^4$ with a very fine
affine triangulation, which we afterwards pull back on $Y^4_1$, so that
$L^3 = \partial Y^4$ becomes a polyhedral hypersurface. Next, with an appropriate
$N_1 \in \mathbb Z _+$, in the context of the Lemma \ref{lemA}, we
have that
$$M^n \times B^N= (\widetilde Y^4 \times B^{N_1}) = \widetilde {Y^4 \times B^{N_1}}$$
and $B^{N_1} = [0,1]^{N_1}$ has its own canonical affine structure, putting now
affine structures on $Y^4 \times B^{N_1}$ and on $M^n \times B^N$ (here $M^n$
is like in (9)).

Remember that Lemma \ref{lemA} tells us that there is a \textsc{wgsc} cell decomposition  of $M^n \times B^N$, call it $H(0)$. Without
any loss of generality, there is an affine triangulation $\Theta$ of $Y^4 \times B^{N_1}$
such that $H(0) \equiv \widetilde \Theta \overset{\textsc{def}} =  \{$the lift of $\Theta$ from $Y^4 \times B^{N_1}$
to $\widetilde {Y^4 \times B^{N_1}}\}$.

Our strategy will be now to work downstairs, at the level of $Y^4 \times B^{N_1}
 \overset {\pi} {\longrightarrow} Y^4$ and use only \textit{admissible subdivisions} for our
 cell-decompositions (by admissible we mean subdivisions which are baricentric or stellar or Siebenmann bisections \cite{Sie_bis}). When we will lift these things, afterwards, at the level
 $M^n \times B^{N} = \widetilde {Y^4 \times B^{N_1}}$, equivariance will be automatic,
 the admissible condition, which is local, is verified upstairs too, and there it will
 preserve the \textsc{wgsc} property which $H(0) = \widetilde \Theta$ initially had.

 We will be interested now in 3- and 4-dimensional skeleta of the triangulation $\Theta$ of $Y^4 \times B^{N_1}$. For notational convenience, we  denote them by  $Z ^{\epsilon}$, for $\epsilon=3$ or $4$.  These come with maps
 $$  Z^4 \overset {F= \pi |_ {Y^4}} { \xrightarrow{\hspace*{1cm}} }  Y^4, \   Z^3 \overset {f= r \circ F} { \xrightarrow{\hspace*{1cm}} } M(\Gamma). \e(23)$$

 \begin{lem}\label{lemE}
 After a small perturbation of the 0-skeleton, $\Theta ^{(0)}$, of $\Theta$, followed by a global isotopic
 perturbation of $\Theta$, which leaves it affine, we can make so that the maps
 $$ Z^4 \overset {F} {\longrightarrow}  Y^4, \ \mbox{and} \quad Z^3 \supset F^{-1} \partial Y ^4
 \overset {F |_{ F^{-1} \partial Y^4} } { \xrightarrow{\hspace*{1.4cm}} }  \partial Y^4  \e (24)$$
 are non-degenerate simplicial surjections, the restrictions of which,
 on each simplex, are affine.
 \end{lem}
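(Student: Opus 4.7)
The plan is a standard general-position argument, together with a Carath\'eodory-type covering argument for surjectivity.

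\emph{Setup.} The submersion $\phi_1$ of Lemma~\ref{lemC}, combined with the standard affine structure on $B^{N_1}$, equips $Y^4 \times B^{N_1}$ with a locally flat affine atlas in which the projection $\pi : Y^4 \times B^{N_1} \to Y^4$ is affine. Because every simplex of $\Theta$ is affine in this atlas, $F = \pi|_{Z^4}$ is automatically affine on each simplex, and so is its restriction to $F^{-1}\partial Y^4$. It remains only to arrange non-degeneracy and surjectivity.

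\emph{Non-degeneracy via general position.} Fix a 4-simplex $\sigma \in Z^4$ with vertices $v_0,\dots,v_4$. The restriction $F|_\sigma$ is non-degenerate exactly when $\pi(v_0),\dots,\pi(v_4)$ are affinely independent in the 4-dimensional local chart on $Y^4$, a Zariski-open condition whose failure locus is a proper algebraic subvariety in the space of vertex positions. Taking the union of failure loci over all 4-simplices (only locally finitely many, by local finiteness of $\Theta$) still gives a meager set; a sufficiently small perturbation of $\Theta^{(0)}$, chosen vertex-by-vertex in the complement, simultaneously makes every 4-simplex non-degenerate. Vertices lying on the subcomplex $\partial Y^4 \times B^{N_1}$ are perturbed within that subcomplex so that it remains a subcomplex of the new triangulation; since $\partial Y^4$ is 3-dimensional this leaves more than enough freedom to place projected vertices in general position inside $\partial Y^4$. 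Declaring each simplex of the abstract $\Theta$ to be the affine hull of its new vertices defines a perturbed triangulation $\Theta'$, and straight-line interpolation of vertex positions, extended affinely on each simplex, realises a small isotopy $\Theta \rightsquigarrow \Theta'$ through affine triangulations; local finiteness together with smallness of the move keep the isotopy embedded.

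\emph{Surjectivity via Carath\'eodory.} For any top-dimensional simplex $\tau$ of $\Theta$, the image $\pi(\tau)\subset Y^4$ is the convex hull of the projected vertices of $\tau$. Carath\'eodory's theorem, applied in the 4-dimensional local chart on $Y^4$, writes any point of $\pi(\tau)$ as a convex combination of at most $5$ of those projected vertices, placing it in the image of the corresponding 4-face of $\tau$, i.e.\ in $F(Z^4)$. Summing over $\tau$ yields $F(Z^4)=Y^4$. The same Carath\'eodory argument in the 3-dimensional local charts on $\partial Y^4$, now with at most $4$ projected vertices, gives the surjectivity of $F|_{F^{-1}\partial Y^4}$ onto $\partial Y^4$; non-degeneracy further forces $F^{-1}\partial Y^4 \subset Z^3$, since no 4-simplex can project into a 3-dimensional target. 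The one genuinely delicate point is the coordinated perturbation step: producing a single small move of $\Theta^{(0)}$ that simultaneously avoids all the bad loci, respects the required subcomplex $\partial Y^4 \times B^{N_1}$, and keeps the interpolating isotopy an embedded affine triangulation. This is routine in the affine PL category once local finiteness of $\Theta$ is used, but it is where essentially all the content of the lemma is concentrated.
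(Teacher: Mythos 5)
The paper offers no proof of Lemma~\ref{lemE} at all beyond a pointer to an analogous argument in \cite{Po2-duke}, so your write-up is already more explicit than the source. Your overall strategy --- affineness of $\pi$ in the pulled-back charts, general position of the $0$-skeleton for non-degeneracy, and Carath\'eodory in the $4$-dimensional (resp.\ $3$-dimensional) charts for surjectivity of the skeleton projection --- is the natural one, and the Carath\'eodory step in particular is correct and cleanly stated. Two points, however, need repair.

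First, your own setup contradicts the non-degeneracy you claim. You rightly insist that vertices lying on $\partial Y^4\times B^{N_1}$ be perturbed within that subcomplex so that it remains a subcomplex. But $\partial Y^4\times B^{N_1}$ has dimension $3+N_1\geq 4$ (since $N_1\geq 1$), hence contains $4$-simplices of $\Theta$; every such $4$-simplex belongs to $Z^4$ and projects under $\pi$ into the $3$-dimensional $\partial Y^4$, so no perturbation keeping its vertices in the boundary subcomplex can make $F$ non-degenerate on it. Your inference that ``non-degeneracy forces $F^{-1}\partial Y^4\subset Z^3$'' is therefore circular: the point set $F^{-1}\partial Y^4=Z^4\cap\bigl(\partial Y^4\times B^{N_1}\bigr)$ contains precisely those $4$-simplices on which non-degeneracy must fail. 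You need either to reformulate the conclusion (non-degeneracy for the simplices of $Z^4$ not contained in the boundary subcomplex, together with non-degeneracy of the simplices of dimension at most $3$ of that subcomplex into $\partial Y^4$) or to explain how the offending $4$-simplices are discarded; as written, the argument asserts something internally inconsistent. Second, you establish that $F$ is simplexwise affine and injective on each simplex, but not that it is \emph{simplicial}: the diagram (26) of Lemma~\ref{lemF} requires an actual triangulation $\Theta(Y^4)$ of the target with respect to which $F$ is a simplicial map. This needs the standard further step of forming the common affine refinement of the images $F(\sigma)$, triangulating it, and pulling the subdivision back to $\Theta(Z^4)$ (where, by non-degeneracy, it lifts to an affine subdivision); the step is routine, but it is part of what the lemma asserts and is where the target triangulation used in the rest of the paper actually comes from.
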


 \begin{proof} The proof is left to the reader (see e.g. the argument analogous to
 this one in \cite{Po2-duke}).
 \end{proof}

 So, in the context of (24), we have now two affine triangulations, $\Theta (Z^4)$ and 
 $\Theta (Y^4)$, connected by a simplicial non-degenerate map $F$.

 We introduce now a second class of triangulations, compatible with the same
 differential structure as the $\Theta (Y^4)$, but related now to the foliation $\mathcal F$ too.
 These triangulations are denoted $\Theta _{\mathcal F}(Y^4)$, and will be subjected to
 the following conditions:

 \begin{itemize}
\item [ (24$-$1)]  $ M(\Gamma)$ is a subcomplex of  $ \Theta _{\mathcal F} (Y^4)$;

\item  [ (24$-$2)]  there is a distinguished, quite dense, set of leaves, all subcomplexes of
$\Theta _{\mathcal F} (Y^4)$,
 $$L^3_0 = M(\Gamma), \ L_1^3, L_2^3 , \cdots , L_q ^3 = L^3 \times \{1\}  = \partial Y^4,$$
  such that every 4-simplex $\sigma ^4$ of $\Theta _{\mathcal F} (Y^4)$ rests on two consecutive distinguished
  leaves $L_i ^3, L^3_{i+1}$;

\item  [ (24$-$3)] the 3-simplexes of $\Theta _{\mathcal F} (Y^4) $  are all essentially parallel to $ \mathcal F $,
 always transversal to the fibers of the retraction $r$ (from (21)), and such that $ r | {\sigma ^3} $  injects.
\end{itemize}

 \noindent Moreover, it is assumed that the triangulation $\Theta _{\mathcal F} (Y^4) | _{M(\Gamma)}$ is sufficiently fine so that
 $r (\sigma ^3)$ is a subcomplex. 

 In the context of $\Theta _{\mathcal F}$ we will have $\mathcal F$\textit{-admissible} subdivisions
 $$ \Theta _{\mathcal F} (Y^4) \overset {\mathcal F \small{-admissible}}
 {\underset {\small{subdivisions}}{ \xrightarrow{\hspace*{2cm}} }}\Theta ^1 _{\mathcal F} (Y^4) \e (25)$$

\noindent which are both admissible and respect the conditions (24$-$1) to (24$-$3), with a possibly denser,
bigger subset of distinguished leaves.

\begin{lem}\label{lemF}
Once  both $\Theta (Y^4)$ and $\Theta _{\mathcal F} (Y^4)$ are given, there exist then admissible, respectively
$\mathcal F$-admissible, subdivisions for each of them, yielding isomorphic cell-decompositions, like in
the diagram below (where all the vertical arrows are subdivisions)
$$\begin{diagram}
  & & \Theta _{\mathcal F} (Y^4)  &  \lTo^{\small{cell-decompositions}} & Y^4 & \rTo ^{\small{cell-dec.}}& \Theta  (Y^4 )& \lTo _F & \Theta  (Z^4)\\
&&\dTo   &  &  && \dTo &&  \dTo\\
M(\Gamma) & \lTo ^{\ r} & \Theta ^1_{\mathcal F} (Y^4)  &  \lTo^{\ \ \   \small{\ isomorphism}\ \mathcal I}&  & & \Theta ^1 (Y^4 )& \lTo ^{\ F^1}& \Theta ^1 (Z^4)
\end{diagram} \e (26)$$
where both $F$ and $F^1$ in the diagram are simplicial and non-degenerate.
\end{lem}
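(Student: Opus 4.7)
The plan is to construct the common refinement $\Theta^1(Y^4) = \Theta^1_{\mathcal F}(Y^4)$ by taking the geometric overlay of the two given triangulations and then realizing that overlay, on each side, via a coordinated sequence of admissible (respectively $\mathcal F$-admissible) Siebenmann bisections \cite{Sie_bis}, followed by stellar subdivisions to recover a simplicial structure.

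First I would perturb $\Theta_{\mathcal F}(Y^4)$ by a small PL isotopy of $Y^4$, supported away from the distinguished leaves $L^3_0, L^3_1, \ldots, L^3_q$, so as to put it in affine general position with respect to $\Theta(Y^4)$. The perturbation is taken small enough that conditions (24$-$1)--(24$-$3) persist, since each is an open transversality condition (transversality to the fibers of $r$, injectivity of $r|_{\sigma^3}$, and parallelism to $\mathcal F$ modulo the distinguished leaves). The overlay $\mathcal C$ of the two triangulations is then a locally finite polyhedral cell decomposition of $Y^4$ whose open cells are the connected components of the intersections of the open simplices of $\Theta$ with the open simplices of $\Theta_{\mathcal F}$.

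Next I would express $\mathcal C$ as a subdivision of each triangulation in two symmetric ways. For each $4$-simplex $\sigma \in \Theta(Y^4)$, the finitely many affine hyperplanes carrying the $3$-faces of simplices of $\Theta_{\mathcal F}$ that meet $\sigma$ cut $\sigma$ into convex polytopes by iterated Siebenmann bisections; a final stellar subdivision turns this cell pattern into a simplicial complex combinatorially identical to $\mathcal C|_\sigma$. Applying the same procedure, with the roles exchanged, to each simplex of $\Theta_{\mathcal F}(Y^4)$ yields the same cell decomposition $\mathcal C$ as an $\mathcal F$-admissible subdivision. The identity map on $Y^4$ then gives the isomorphism $\mathcal I$ in diagram (26). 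To inherit the subdivision through $F$, I use that $F\colon Z^4 \to Y^4$ is affine and non-degenerate on each simplex, so every bisecting hyperplane used in $\Theta(Y^4)$ pulls back to finitely many affine hyperplanes on each simplex of $\Theta(Z^4)$; performing the corresponding Siebenmann bisections produces $\Theta^1(Z^4)$ together with the simplicial non-degenerate map $F^1$.

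The main obstacle is that the bisecting hyperplanes imported from the $\Theta$ side are not a priori adapted to the foliation $\mathcal F$ or to the fibers of $r$, and therefore threaten the $\mathcal F$-admissibility conditions (24$-$2)--(24$-$3). The remedy is to precede the construction by an $\mathcal F$-admissible subdivision of $\Theta_{\mathcal F}$ making its simplices much smaller than those of $\Theta$ and than the geometric scale on which $r$ and $\mathcal F$ vary; at this fine scale, each imported hyperplane restricts to an $\mathcal F$-simplex essentially as an affine slice, so that every resulting $3$-simplex still rests on two consecutive distinguished leaves, still lies close to $\mathcal F$, and remains transverse to the fibers of $r$ with $r$ injective on it. With this quantitative preliminary, the open-condition argument used to control the initial perturbation applies uniformly to all the pieces produced by the bisections, and (24$-$1)--(24$-$3) are preserved throughout, completing the diagram (26).
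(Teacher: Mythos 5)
There is a genuine gap at the foundation of your construction: you treat $\Theta(Y^4)$ and $\Theta_{\mathcal F}(Y^4)$ as if both were affine with respect to one and the same affine structure on $Y^4$, so that their overlay is a polyhedral cell decomposition whose cells are convex polytopes cut out, inside each simplex, by finitely many affine hyperplanes. But that is not what is given. Only $\Theta(Y^4)$ is affine (it is pulled back from a fine affine triangulation of $\mathbb R^4$ via the submersion $\phi_1$); the triangulation $\Theta_{\mathcal F}(Y^4)$ is merely \emph{compatible with the same \textsc{Diff} structure} and is adapted to the foliation $\mathcal F$ and to the retraction $r$, so its simplices are smooth, not affine, in the $\phi_1$-coordinates. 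A small PL isotopy can achieve transversality, but it cannot straighten the $3$-faces of $\Theta_{\mathcal F}$ into affine hyperplanes while preserving its combinatorics; consequently the ``bisecting hyperplanes'' on which your entire Siebenmann-bisection scheme rests need not exist, and the components of intersections of open simplices of the two triangulations need not be convex polytopes, nor even cells. This is exactly the point where the paper instead invokes the smooth Hauptvermutung (Whitehead's theorem that two triangulations compatible with the same smooth structure admit isomorphic subdivisions) to produce the common refinement you are trying to build by hand, and only afterwards uses Siebenmann's cellulations and his version of the Alexander lemma to realize those subdivisions by admissible, respectively $\mathcal F$-admissible, moves.

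The later parts of your argument are reasonable and close in spirit to what is actually needed: pulling the subdivision of $\Theta(Y^4)$ back through the simplexwise-affine, non-degenerate map $F$ to obtain $\Theta^1(Z^4)$ and $F^1$ is fine, and your precaution of first refining $\Theta_{\mathcal F}$ at a scale fine enough that conditions (24$-$1)--(24$-$3) survive the refinement is the right instinct (the paper allows a denser set of distinguished leaves for precisely this reason). But both of these steps sit downstream of the missing ingredient. The repair is to replace the overlay construction by the appeal to the smooth Hauptvermutung, after which your bookkeeping for $F^1$ and for the preservation of (24$-$1)--(24$-$3) can essentially be retained.
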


\begin{proof}

Both $\Theta (Y^4)  $ and $\Theta _{\mathcal F} (Y^4)  $ are compatible with the same \textsc{Diff} structure
on $Y^4$, and, via the smooth Hauptvermutung, they have isomorphic subdivisions. From there on,
one uses Siebenmann's cellulations and his very transparent version of the old Alexander lemma \cite{Sie_bis}.
\end{proof}

\subsection{Proof of the Main Theorem, point 1.}
\begin{proof}
By taking the universal cover  of the lower long composite arrow in (26), we get the following map
$$ X^3 \overset{\textsc{def}}= \{\mbox{the 3-skeleton of the universal cover of } \Theta ^1 (Z^4)\} \overset {f} {\longrightarrow} \widetilde {M (\Gamma)}, \e (27)$$

\noindent where $f \overset{\textsc{def}}= (r \circ \mathcal I \circ F^1)^\sim$,
which has the following features:
\begin{itemize}
\item [ (27$-$1)] since both $F^1$ and $r$ are non-degenerate, so is $f$;

\item [ (27$-$2)] we have started from $H(0)= \widetilde \Theta$ which was \textsc{wgsc} and,
from there on, all the subdivisions were admissible: this implies that $X^3$ is also \textsc{wgsc};

\item [ (27$-$3)] the map $f$  is surjective and, moreover, it admits the section $\widetilde {M(\Gamma)}
\subset X^3$ (see (24-1)), which is such that $f | _{\widetilde {M(\Gamma)} }=id$.
\end{itemize}

\noindent From this point on, there is a standard argument showing
that $\Psi (f) = \Phi (f)$ (and see here e.g. the proof of Lemma 2.8 in
\cite{Po2-duke} too). In a nutshell, this argument is the
following. Assume $\Psi (f) \subsetneq \Phi (f)$, then the induced
map
$$ X ^3 / \Psi (f) \longrightarrow \widetilde {M(\Gamma)} $$

\noindent would  have singularities, which is a contradiction.

So, by now, we have already shown that (27) is an equivariant, \textsc{wgsc}
$3^d$-\textsc{representation} of $\widetilde M(\Gamma)$. It remains to check condition (5) of Theorem \ref{thm2}, or, equivalently, (5$-bis$).

Since the fibers of $Y^4  \times B^{N_1}  \overset {r \circ \pi} {\xrightarrow{\hspace*{0.6cm}}}
M(\Gamma) $  are compact, then so are also those of $ \Theta ^4 (Z^4)
 \overset {r \circ \mathcal I \circ F^1} {\xrightarrow{\hspace*{1cm}}} M(\Gamma)$ and of $$ 
 \mbox {the 3-skeleton of } \Theta ^4 (Z^4)  \overset {r \circ \mathcal I \circ F^1} {\xrightarrow{\hspace*{1.5cm}}} M(\Gamma) . \e (28)$$

This means that, in the context of (28), for any 3-simplex $\sigma ^3$
 of $M (\Gamma)$, the inverse image consists of a finite number of
 3-simplexes, this number being clearly uniformly bounded.

 By equivariance, the same is true for
 $$X^3 \overset{f} {\longrightarrow} \widetilde {M(\Gamma)}, $$

\noindent and the point (1)  in Theorem \ref{thm2} is by now proved.
\end{proof}

\subsection{Proof of the Main Theorem, point  2.}

\begin{proof}
We want to move now from the 3-dimensional \textsc{representation}
(27) to a 2-dimensional  one
$$X^2 \overset{f} {\longrightarrow} \widetilde {M(\Gamma)}, \e (29)$$
\noindent which should be \textsc{wgsc}, equivariant, and also
satisfying (6).

The general idea is that, for the passage (27) $\Longrightarrow$ (29), there
is a similar step in \cite{Po_QSF2}, and the techniques used there can be adapted here too. Hence, we will only give here the main lines of the argument.
Since we want to have equivariance, we will work downstairs at level
$M(\Gamma)$, taking universal coverings in the end. From the lower
line in (26), we pick now the map
$$ Y^3 \overset{\textsc{def}}= \mbox { The 3-skeleton of } \Theta ^1 (Z^4) \overset {f \ \overset{\textsc{def}} =\ r \circ \mathcal I \circ F^1}
{\xrightarrow{\hspace*{1.5cm}}}  M(\Gamma), \
(\mbox{where }Y^3 = X^3 / \Gamma),  \e (30)$$

\noindent choosing to read $Y^3$ like a singular
handlebody decomposition (see here \cite{Po_QSF1_Geom-Ded, Po_QSF2, PT4-AMS}). For each
3-handle of our $Y^3$ of (30), there are three mutually orthogonal, not everywhere
well-defined foliations
$$\mathcal F_0 (\textsc{blue}), \ \mathcal F_1 (\textsc{red}),  \  \mathcal F_2 (\textsc{black}). \e (31)$$

\noindent Each 3-handle is endowed with the three foliations, but,
$\mathcal F_{\lambda} (\textsc{color})$ is \textit{natural} for
the handles of index $\lambda$. There, it is essentially a product
foliation of copies of the lateral surface of the handle in
question, namely  $\partial ($cocore$)\times $ core. The reader is
invited to look at the figures in \cite{PT4-AMS}. The paper
\cite{PT4-AMS} was written, of course, in the non-singular context
of $\widetilde M^3$ rather then of $\widetilde {M(\Gamma)}$, but,
for these individual handles, the story is the same. In  \cite{PT4-AMS}, $\widetilde M^3$ was non-singular and the three foliations were global, although of course not everywhere well-defined. While here, our $\widetilde {M(\Gamma)}$ is singular and the $3^d$ equivariant context corresponds to \cite{Po_QSF1_Geom-Ded} rather than to \cite{PT4-AMS}. Each individual handle has now its own three foliations. We can use these foliations, like in \cite{PT4-AMS}, in order to got from the $3^d$ \textsc{representation} (27) to the $2^d$ \textsc{representation} (29).

Since the map (30) is devoid of any pathologies at infinity,  we can
afford to work with usual compact handles $H^{\lambda}$, of index
$\lambda$ and dimension 3. For each of these handles $H^{\lambda}$  we consider
now a very dense 2-skeleton, which uses only finitely many leaves of the foliations (31).

Putting these things together, we get a simplicial non-degenerate map
$$ Y^2 \overset {f} {\longrightarrow } M(\Gamma) \e (32)$$

\noindent about which the following items may be assumed without any loss
of generality:

\begin{itemize}

\item [(32$-$1)]  we have subsets $Sing (f) \subset Y^2, \ Sing (Y^2) \subset Y^2 -
Sing (f)$,  just like in (3$-$6), and, outside $Sing (f)$, the double points of $f$, $M_2 (f) \subset Y^2$, 
are transversal  intersection points;

\item [(32$-$2)]  the image $ f (Y^2)  \subset M(\Gamma) $   is very dense, i.e. the
complement  $M(\Gamma) - f (Y^2)$   consists of a disjoint union of three copies
of $\mathbb R^3_+ $  glued along their common $ \partial \mathbb R^3_+ = \mathbb R^2$.

\end{itemize}

\noindent Next, we take the universal cover of (32),
$$  X^2 \overset{\textsc{def}} = \widetilde Y^2 \overset{\tilde f } {\longrightarrow } \widetilde {M(\Gamma)}. \e (33)$$

\noindent Here is what we can say about (33).

Our (33) is automatically equivariant and, since the $X^3$ in (27) was
\textsc{wgsc}, so is our present $X^2$ too, since it is actually the 2-skeleton of it.
The fact that in the context of (27) we had $\Psi (f) = \Phi (f)$, together with
the fact that $X^2$ is very dense, make that in the context of (33) we also have
$\Psi (\tilde f) = \Phi (\tilde f)$, so that (33) is an equivariant \textsc{wgsc
2$^d$-representation}, for which local finiteness should be obvious.

Locally, (33) is exactly like (32), where $Y^2$  is a finite complex. Hence, condition 
(6) of Theorem \ref{thm2} follows  automatically.  Theorem \ref{thm2} is by now completely proved.
\end{proof}

\end{document}